\newcommand{\f}{\frac}
\renewcommand{\phi}{\varphi}
\newcommand{\E}{\mathbf E}
\newcommand{\A}{\mathcal{A}}
\renewcommand{\P}{\mathbf P}
\newcommand{\tp}{\tilde p}
\newcommand{\tU}{\tilde U}
\DeclareMathOperator{\Poi}{Poi}
\DeclareMathOperator{\SFM}{SFM}
\DeclareMathOperator{\FM}{FM}
\DeclareMathOperator{\NBFM}{nbFM}
\DeclareMathOperator{\NBBRW}{nbBRW}
\DeclareMathOperator{\RNBBRW}{rnbBRW}
\newtheorem{theorem}{Theorem}
\newtheorem{lemma}[theorem]{Lemma}
\newtheorem{fact}[theorem]{Fact}
\newtheorem{proposition}[theorem]{Proposition}
\theoremstyle{definition}
\newtheorem{remark}[theorem]{Remark}
    \title{Critical drift estimates for the frog model on trees}
\author{Emma Bailey} \email{ebailey@gc.cuny.edu}
\author{Matthew Junge} \email{Matthew.Junge@baruch.cuny.edu}
\author{Jiaqi Liu} \email{liujiaqi@sas.upenn.edu}
\thanks{All authors were partially supported by NSF grant \#2115936.
}
\begin{document}
\maketitle

\begin{abstract}
    Place an active particle at the root of a $d$-ary tree and a single dormant particle at each non-root site. In discrete time, active particles move towards the root with probability $p$ and, otherwise, away from the root to a uniformly sampled child vertex.  When an active particle moves to a site containing a dormant particle, the dormant particle becomes active. The critical drift $p_d$ is the infimum over all $p$ for which infinitely many particles visit the root almost surely. Guo, Tang, and Wei proved that $\sup_{d\geq 3} p_d \leq 1/3$. We improve this bound to $5/17$ with a shorter argument that generalizes to give bounds on $\sup_{d \geq m} p_d$. We additionally prove that $\limsup p_d \leq 1/6$ by finding the limiting critical drift for a non-backtracking variant.
\end{abstract}

\section{Introduction}



Let $\mathbb T_d$ be the infinite, rooted $d$-ary tree in which each vertex has $d\geq 2$ child vertices. Place an active particle at the root $\varnothing$ and a single dormant particle at each non-root site. Fix $p \in (0,1)$
and have each active particle perform a nearest neighbor $p$-biased random walk. At each discrete time step an active particle moves one step towards the root with probability $p$ and away from the root to a uniformly sampled child vertex with probability $1-p$. 
When an active particle moves to a site with a dormant particle, the dormant particle becomes active and begins its own independent $p$-biased random walk. Call this process the \emph{frog model with drift on $\mathbb T_d$} and denote it by $\FM(d,p)$.

Frog model dynamics capture aspects of the spread of infection, a rumor, or energy. When the underlying graph is infinite, a basic question is whether or not infinitely many particles visit the root. Many papers have studied this with simple random walks on integer lattices and trees \cite{telcs1999branching, alves2002phase, popov2001frogs, hoffman2017recurrence, hoffman2019infection, michelen2019frog}. There has been recent interest in the variant in which active particles perform biased random walk \cite{dobler2018reccurence, beckman2019frog, guo2022minimal}.

A \emph{root visit} is counted each time an active particle moves to the root. Let $V_t$ be the number of root visits up to time $t$ and $V_{\FM(d,p)}\coloneqq \lim_{t \to \infty} V_t$ be the total number of root visits. We say that  $\FM(d,p)$ is \emph{recurrent} if $\P(V_{\FM(d,p)}=\infty) =1 $. Recurrence satisfies a 0-1 law (see \cite[Proof of Proposition 1.4]{beckman2019frog}). Accordingly, call the process \emph{transient} if it is not recurrent. Define 
\begin{align}
    p_d \coloneqq \inf \{ p \colon \FM(d,p) \text{ is recurrent}\}
\end{align}
to be the infimum over all drifts for which $\FM(d,p)$ is recurrent. Since a single $p$-biased walk is recurrent for $p\geq 1/2$, we are only interested in $p\in (0,1/2)$.

A natural case $\FM(2,1/3)$ has active particles performing simple random walk on the binary tree. Hoffman, Johnson, and Junge resolved a longstanding open problem by proving that $\FM(2,1/3)$ is recurrent \cite{hoffman2017recurrence}. Conversely, $\FM(2,p)$ with $p<1/3$ is transient since the dominating process with all particles initially active is transient. Thus, $p_2 =1/3$. 

Increasing $p$ creates a stronger drift towards the root, and increasing $d$ results in more dormant frogs. Both of these effects should result in more visits to the root. An intriguing feature of $\FM(d,p)$ is that there is no known proof that $V_{\FM(d,p)}$ stochastically increases in $p$ or $d$. For example, it is not obvious that $p_2=1/3$ is a uniform bound for $p_d$. The main result from \cite{beckman2019frog} proved a weaker bound $\sup_{d \geq 3} p_d \leq 0.4155$. In \cite{guo2022minimal}, Guo, Tang, and Wei improved this bound to $1/3$. Our first result is a slightly better bound.

\begin{theorem} \thlabel{thm:sup}
    $\sup_{d \geq 3} p_d \leq 5/17 \approx 0.2941.$
\end{theorem}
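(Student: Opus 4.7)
The plan is to compare $\FM(d,p)$ with a tractable self-similar variant that underestimates the number of root visits, and then analyze the variant via a branching/fixed-point argument. Concretely, I would introduce an auxiliary process $\TSFM(d,p)$ in which each awakened frog only contributes to wake-ups and root visits during a restricted portion of its trajectory. A natural restriction is to stop tracking an awakened frog the first time it moves strictly above its initial depth; this enforces that the contributions of the $d$ subtrees rooted at the children of the root are conditionally independent, i.e. the process is exactly self-similar across scales. A pathwise coupling then gives $\TSFM(d,p) \leq \FM(d,p)$ for the number of root visits, so it suffices to prove recurrence of $\TSFM(d,p)$ at $p = 5/17$ for each $d \geq 3$.

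The second step is to exploit the self-similarity. Decompose the root frog's trajectory into its first step into a child $c$, its subsequent excursion inside the subtree rooted at $c$ until its first attempt to leave, and the dynamics of the frogs it wakes en route (each of which recursively triggers an independent copy of $\TSFM$ in its own subtree). This should produce a functional equation for a generating function $\phi_d(s)$ of the number of root visits, or equivalently a polynomial relation involving the single-walker return probability $\rho = p/(1-p)$ and the branching factor $d$. Recurrence of $\TSFM(d,p)$ is then equivalent to supercriticality of the associated offspring distribution, giving an explicit threshold $p^*(d)$.

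The third step is a one-shot verification. Since the all-active bound already forces $p_d \geq 1/(d+1)$, one expects $p^*(d)$ to decrease in $d$, so the binding case is $d=3$. The main calculation reduces to checking that the mean-offspring inequality for $\TSFM(3, 5/17)$ holds. Monotonicity in $d$ (which should follow by inspection from the recursion, since larger $d$ only adds independent subtree contributions) then completes the argument for all $d \geq 3$.

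The hardest step is designing $\TSFM$ so that it beats the Guo--Tang--Wei bound of $1/3$ while still remaining self-similar. The crudest truncation (stop each frog at its first return to the parent level) essentially ignores the fact that, before exiting a subtree, an awakened frog often completes several excursions and wakes several additional frogs. Recapturing these wake-ups, for example by tracking two generations of descendants rather than one, or by allowing a frog to make a geometrically distributed number of child-excursions before truncation, is what should push the threshold down from $1/3$ to $5/17$. Keeping the resulting recursion scalar and explicitly solvable is the main technical obstacle; once set up correctly, the bound at $d = 3, p = 5/17$ ought to reduce to a short algebraic verification.
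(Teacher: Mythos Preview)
Your high-level plan---pass to a self-similar subprocess, extract a recursion, verify at the binding case---matches the paper's architecture, but two of your concrete steps would not go through as written.

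First, the recursion for the number of root visits $V$ in the self-similar model is not governed by a scalar mean-offspring parameter. The fixed-point equation has the form $V \overset{d}{=} \Poi\bigl(p^* + \hat p(1+U)\lambda\bigr)$ when $V\sim\Poi(\lambda)$, where $U$ is the (random) number of additional subtrees activated in one generation. Recurrence is \emph{not} equivalent to $\E[1+U]\cdot\hat p>1$; one must show the Laplace-transform inequality
\[
\E\bigl[e^{-p^*-\hat p(1+U)\lambda}\bigr]\le e^{-\lambda-\epsilon}\quad\text{for all }\lambda\ge 0,
\]
which requires controlling the full distribution of $U$, in particular the probability that $U$ is small. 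This is why the paper works with Poisson initial conditions (so thinning makes the distribution of $U$ explicit) and why the final check is a bound on a degree-$27$ polynomial via Sturm's theorem, not a one-line mean comparison.

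Second, your reduction to $d=3$ via ``monotonicity in $d$ should follow by inspection'' is exactly the step that is \emph{not} known. Monotonicity of $p_d$ (or of the self-similar threshold) in $d$ is open; the introduction of the paper emphasizes this. The paper does not prove $p^*(d)$ is decreasing. Instead it constructs, for every $d\ge 3$ simultaneously, a single random variable $U'$ supported on $\{0,1,2\}$ with $U'\preceq U(d,5/17,\lambda)$, by projecting the $d$-leaf star onto a $3$-leaf star and invoking a coupon-collector stochastic comparison. The Laplace-transform inequality is then checked once for $U'$, with no appeal to monotonicity in $d$.

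Your suggestions for sharpening the truncation (two generations, geometric excursions) are not what buys the improvement from $1/3$ to $5/17$; what matters is the combination of Poisson sleeping configurations (to make $U$ computable via thinning and to transfer back to one-per-site via Johnson--Junge) with the coupon-collector reduction to $d=3$.
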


 Besides the bound improvement, we see several positive consequences of \thref{thm:sup}. One is that the proof uses a different technique than what was used in \cite{guo2022minimal}, which provides new perspective. A particular highlight is a simple to check criteria for recurrence of $\FM(d,p)$ in \thref{prop:suff}. Another nice consequence is that our proof is shorter than that given in \cite{guo2022minimal} (four versus nineteen pages). The third is that the bound we obtain is strictly less than $p_2=1/3$. This implies that $p_d < p_2$ for all $d \geq 3$, which supports the conjecture from \cite{beckman2019frog} and \cite{guo2022minimal} that $p_d$ is decreasing. The fourth positive consequence is that our technique can be generalized to give better bounds on $\sup_{d \geq m} p_d$ as $m$ is increased. See \thref{rem:m} for more details. It is unclear if the approach from \cite{guo2022minimal} could be as easily generalized.  To illustrate how the generalization goes, we prove an extension for $d=4$. 

\begin{theorem}\thlabel{cor:4}
    $\sup_{d \geq 4} p_d \leq 27/100.$
\end{theorem}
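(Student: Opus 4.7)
The plan is to mimic the proof of \thref{thm:sup}, applying the sufficient recurrence criterion \thref{prop:suff} at the fixed drift $p^* = 27/100$ for every $d \geq 4$. Since the authors advertise that the technique generalizes by tuning an internal construction parameter (cf.\ \thref{rem:m}), the main task is to identify the right parameter choice for the base case $d=4$ and then propagate the resulting inequality to $d \geq 5$.

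The first step is to specialize \thref{prop:suff} to $d = 4$ and $p = p^*$. Such criteria in frog model recurrence proofs are typically of the form ``an explicit expected-offspring quantity exceeds $1$'', aggregating the return probability $q = p/(1-p)$ of a $p$-biased walk from a child to the root, the number of dormant frogs activated by a single excursion into a subtree, and the further offspring of those newly activated frogs. Plugging in and checking the inequality directly at $d=4$ should pin down $27/100$ as essentially the largest clean drift at which the criterion passes in this base case.

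Next, the same inequality must be seen to hold at drift $p^*$ for all $d \geq 5$. The cleanest route is to establish monotonicity in $d$ at fixed $p$ of the expected-offspring quantity in \thref{prop:suff}: more children per vertex means more dormant particles available for activation during each excursion, so the effective progeny count can only grow. If a clean monotonicity statement in $d$ is awkward, one can instead split into a finite case check for $4 \leq d \leq D$ and a large-$d$ argument based on the non-backtracking comparison underlying the $\limsup p_d \leq 1/6$ result, which is far sharper than $27/100$.

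The main obstacle will be this uniform-in-$d$ step. Because $27/100 < 5/17$, the statement of \thref{thm:sup} is unavailable, so one cannot reduce to the case $d = 4$ alone. Increasing $d$ both enlarges the dormant-particle reservoir (helpful) and dilutes the offspring of each excursion across more children (potentially unhelpful), and keeping these competing effects inside a single monotone inequality---or splitting the argument cleanly at an explicit threshold $D$ and invoking the $d \to \infty$ comparison beyond it---is where the real work lies.
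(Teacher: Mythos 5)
Your plan has a genuine gap at its center: you correctly identify uniformity in $d$ as the crux, but the two routes you offer for it do not work as stated, and you miss the device the argument actually needs. First, the criterion in \thref{prop:suff} is not an ``expected offspring exceeds $1$'' condition; it is the Laplace-transform inequality \eqref{eq:suff}, $\E[e^{-p^*-\hat p(1+U')\lambda}]\leq e^{-\lambda-\epsilon}$, required for \emph{all} $\lambda\geq\lambda_0$, where $U'$ must be a stochastic lower bound for the number $U(d,p,\lambda)$ of activated leaves in the star-graph auxiliary process. Second, your primary route --- verify the criterion at $d=4$ and propagate by monotonicity in $d$ --- runs into exactly the obstruction the paper flags in its introduction: no stochastic monotonicity of the frog model in $d$ is known (only $d\mapsto kd$), and the competing dilution/reservoir effects you yourself mention are precisely why. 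The actual proof does not establish any such monotonicity; instead it sidesteps the issue by constructing a single random variable $\tilde U\in\{0,1,2,3\}$ on a star with only \emph{four} leaves, with the drift probabilities replaced by their worst-case values over all $d\geq4$ (towards the root with probability $81/265=\inf_{d\geq4}p^*$, away with probability $46/73=1-\sup_{d\geq4}p^*$, killed otherwise), so that $\tilde U\preceq U(d,27/100,\lambda)$ simultaneously for every $d\geq4$ by the coupon-collector coupling of \thref{lem:coupon}. The distribution of $\tilde U$ is then computed exactly by Poisson thinning, and \eqref{eq:suff} is verified once for all $d\geq 4$ by a rigorous polynomial root count. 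This uniform, $d$-free stochastic lower bound is the missing idea.

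Your fallback --- a finite check for $4\leq d\leq D$ plus the non-backtracking large-$d$ comparison beyond $D$ --- is not wrong in principle, but it is not carried out and would require substantially more work than you indicate: the large-$d$ recurrence argument (\thref{lem:U''} and \thref{lem:lb}) is not effective as written (no explicit $D$ or $\lambda_0$ is extracted), and for each $d$ in the finite range you would need the exact distribution of $U(d,p,\lambda)$, whose complexity grows with $d$ (cf.\ \thref{rem:m}). As it stands, the proposal names the difficulty but does not supply the construction that resolves it.
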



The authors of \cite{beckman2019frog} further conjectured that 
\begin{align}
\lim_{d \to \infty} p_d = q^*\coloneqq\frac{2- \sqrt 2}{4} \approx 0.1464. \label{eq:lim}
\end{align}
Here $q^*$ is the critical drift for the branching $p$-biased random walk in which each particle does not branch when moving towards the root (which it does with probability $p$) and splits into two particles when moving away from the root. Our second result is a limiting bound on $p_d$ that is near $q^*$.

\begin{theorem} \thlabel{thm:limsup}
$\limsup_{d \to \infty}  p_d \leq 1/6 \approx 0.1667.$
\end{theorem}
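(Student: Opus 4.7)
The plan is to introduce a non-backtracking variant $\NBFM(d,p)$ of the frog model in which each active frog performs a non-backtracking $p$-biased random walk, and establish that (a) recurrence of $\NBFM(d,p)$ implies recurrence of $\FM(d,p)$, so its critical drift $p^{NB}_d$ satisfies $p_d \leq p^{NB}_d$, and (b) $\lim_{d \to \infty} p^{NB}_d = 1/6$. Combining (a) and (b) yields $\limsup p_d \leq 1/6$. For (a), I would construct a coupling of the two processes on a shared probability space. The non-backtracking constraint causes $\NBFM$ walks to escape the root more readily than $\FM$ walks, so $\NBFM(d,p)$ should be strictly harder to make recurrent; intuitively, the backtracking moves allowed in $\FM$ can only create additional opportunities to wake new frogs or revisit the root, so any root-visit trajectory in $\NBFM$ admits a counterpart in $\FM$.

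For (b), I would decompose $\NBFM(d,p)$ into a (multi-type) branching process. Because non-backtracking walks on a tree have tractable geometric structure, the lineage of woken frogs admits a Galton-Watson-like description: each frog wakes further frogs as it walks and either visits the root or escapes. Classifying frogs by the direction of their last move (say, ``type A'' meaning just moved away from the root and ``type B'' meaning just moved toward it) gives a mean-matrix/Perron-eigenvalue analysis, or equivalently a fixed-point equation for a generating function encoding the number of root visits per lineage. The critical $p^{NB}_d$ is the value at which this equation transitions from having a finite smallest solution to requiring an infinite one. Taking $d \to \infty$ eliminates vanishing probabilities (notably the chance that a non-backtracking walk's next step collides with an already-explored vertex) and reduces the equation to an explicit limit form whose critical root is $1/6$.

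The main obstacle will be (a): formally coupling $\NBFM(d,p)$ and $\FM(d,p)$ so that recurrence transfers. The two processes have different walk laws, so a careful trajectory-level or branching-level comparison is required---for instance, realizing NBFM trajectories as censored subsequences of FM trajectories, and then arguing that the extra motion in FM can only create additional root visits. An alternative route, which may be cleaner, is to route the comparison through a common non-backtracking branching random walk $\NBBRW(d,p)$: show that $\FM(d,p)$ stochastically dominates $\NBBRW(d,p)$ and that the limiting critical drift of $\NBBRW$ equals $1/6$, sidestepping the need for a direct NBFM-to-FM coupling.
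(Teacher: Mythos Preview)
Your high-level decomposition into (a) and (b) matches the paper exactly: the paper deduces \thref{thm:limsup} from \thref{lem:so} (your (a): recurrence of $\NBFM(d,p)$ implies recurrence of $\FM(d,p)$) and \thref{thm:lim} (your (b): $p_d' \to 1/6$). Step (a) is handled by a known coupling (the paper cites \cite{guo2022minimal}), so this part of your plan is fine.

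The gap is in (b), specifically in the direction of stochastic comparison. Your sketch reduces to the limiting multitype branching random walk $\NBBRW(p)$ and computes that its critical drift is $1/6$. That computation is correct and is carried out in the paper. But knowing that $\NBBRW(p)$ is recurrent for $p>1/6$ does \emph{not} by itself give recurrence of $\NBFM(d,p)$ for large finite $d$: in $\NBBRW$ every step away from the root activates a fresh $\Poi(1)$ packet, whereas in $\NBFM$ activation happens only at sites not yet visited. Thus $\NBBRW$ dominates $\NBFM$, not the reverse, and recurrence does not pass downward. Your ``alternative route'' (showing that $\FM(d,p)$ stochastically dominates $\NBBRW(d,p)$) is therefore stated in the wrong direction. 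Indeed, the paper uses precisely this domination the \emph{other} way, to prove the lower bound $p_d' \geq 1/6$ (\thref{lem:transient}).

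To get the upper bound one needs a \emph{sub}-process of $\NBFM$ that one can still prove is recurrent. The paper uses the self-similar frog model $\SFM(d,p)$, which kills particles that step onto already-visited vertices; this restores enough independence to get a recursive distributional equation and the operator criterion of \thref{prop:suff}. The recurrence of $\NBBRW(p)$ for $p>1/6$ enters only indirectly, as a coupling device to seed the bootstrap $V_{\SFM(d,p)} \succeq \Poi(\lambda_d)$ with $\lambda_d \to \infty$ (\thref{lem:lb}); one then verifies the Laplace-transform inequality \eqref{eq:suff} with a carefully constructed lower bound $U''$ on the number of activated leaves (\thref{lem:U''}). Your branching/Perron analysis recovers the limiting value $1/6$ but is missing this sub-process machinery, without which the finite-$d$ recurrence does not follow.
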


Intuition suggests that as $d$ becomes larger, most steps away from the root by particles in $\FM(d,p)$ will be to sites containing dormant particles. Thus, $\FM(d,p)$ ought to converge to this branching random walk as $d \to \infty$. As mentioned previously, monotonicity of $p_d$ has yet to be established. So, both the existence of the limit and convergence to $q^*$ remain open. 

The only known monotonicity result for $\FM(d,p)$ is \cite[Proposition 1.2]{beckman2019frog}, which states that $V_{\FM(d,p)} \preceq V_{\FM(kd,p)}$ for any positive integer $k$. 
One difficulty is that the frog model has regimes in which the set of sites visited by active particles contains a linearly expanding ball centered at the root \cite{hoffman2019infection}. No activation occurs in this growing region. This distinguishes the frog model from branching random walk on a macroscopic level, and casts a shadow of doubt on \eqref{eq:lim}.


\thref{thm:limsup} is proven by exactly computing the limiting critical drift for the {non-backtracking frog model} denoted by $\NBFM(d,p)$. 
This is a relevant model since all arguments that we know of for recurrence of $\FM(d,p)$ rely on proving that $\NBFM(d,p)$ is recurrent. 
In $\NBFM(d,p)$, paths of active particles are non-backtracking. Let
\begin{align}
	p^* = p^*(p,d) \coloneqq \f{ p(d-1)}{d - (d+1) p} \text{ and } \hat p = \hat p(p) \coloneqq \f{p}{1-p}. \label{eq:p*}
\end{align}  
Initially, there is one active frog at the root. It moves to a uniformly sampled child vertex in the first step and activates the dormant frog there. Just activated frogs move towards the root with probability $p^*$, and otherwise away from the root to a uniformly sampled child vertex. For subsequent steps, if the previous step was towards the root, then the next step will be towards the root with probability $\hat p$. If the previous step was away from the root, all subsequent steps will be away from the root to uniformly sampled child vertices. Any particles that visit the root are killed there and no longer participate in the process. Let $V_{\NBFM(d,p)}$ denote the total number of root visits in $\NBFM(d,p)$ and say that the process is recurrent if $\P(V_{\NBFM(d,p)} = \infty) =1$. Define $$p_d' = \inf \{ p \colon  \text{$\NBFM(d,p)$ is recurrent} \}.$$ 
We find the exact limiting value of $p_d'$.

\begin{theorem}\thlabel{thm:lim}
$\lim_{d \to \infty} p_d' = 1/6$.
\end{theorem}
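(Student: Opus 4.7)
The plan is to express $\NBFM(d,p)$ as a depth-indexed recursion, reduce it to a second-order linear recurrence in the tail sum, and read off the critical drift from the characteristic polynomial. Let $R_n$ be the expected number of root visits produced by a single newly activated frog at depth $n$, including all visits generated by frogs it subsequently activates on its descent. Decomposing over the frog's first step and, if that step is root-ward, over the geometric number of consecutive further root-ward steps (each occurring with probability $\hat p$), yields for $n \geq 1$
\[
R_n = p^*\, \hat p^{n-1} + p^*(1-\hat p)\sum_{k=0}^{n-2}\hat p^{k}\,S_{n-k} + (1-p^*)\,S_{n+1},
\]
where $S_n := \sum_{m \geq n} R_m$. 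The three terms correspond to marching straight to the root, turning at depth $n-k-1$ and then descending forever (activating a new frog at every depth $n-k, n-k+1, \dots$), and stepping away from the root immediately.

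Next I would form $R_{n+1} - \hat p R_n$ to telescope the convolution, and substitute $R_m = S_m - S_{m+1}$, leaving the recurrence
\[
(2-p^*)\,S_{n+2} = (1 + 2\hat p - p^*)\,S_{n+1} - \hat p\,S_n, \qquad n \geq 1,
\]
whose characteristic polynomial $(2-p^*)\lambda^2 - (1+2\hat p - p^*)\lambda + \hat p$ has discriminant $\Delta(p,d) = (1 + 2\hat p - p^*)^2 - 4\hat p(2 - p^*)$. When $\Delta > 0$, both roots are real and a short sum/product check places them in $(0,1)$, so the recurrence admits a positive geometrically decaying solution and the process is transient; when $\Delta < 0$, the roots are complex conjugates, every nonzero real solution oscillates in sign, and since $S_n \geq 0$ some $S_n$ must be $+\infty$, yielding recurrence. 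Thus $p_d'$ coincides with the unique zero of $\Delta(p,d)$ in $(0,1/2)$.

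Sending $d \to \infty$ gives $p^* \to \hat p$, so $\Delta = 0$ reduces to $(1+\hat p)^2 = 4\hat p(2-\hat p)$, equivalently $5\hat p^2 - 6\hat p + 1 = 0$, whose relevant root is $\hat p = 1/5$. This corresponds to $p = \hat p/(1+\hat p) = 1/6$, and continuity in $d$ of the implicit equation yields $p_d' \to 1/6$.

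The main obstacle is turning the sign analysis of $\Delta$ into a clean recurrence/transience dichotomy. The complex-root direction is immediate from the oscillation argument. The real-root direction is more delicate because the true values $S_1, S_2$ might in principle excite a growing mode; I would control this by truncating the tree at depth $N$, noting that the truncated tail sums $S_n^{(N)}$ are finite, increase monotonically to $S_n$, and satisfy the same recurrence, then bounding them by the smaller characteristic root before passing $N \to \infty$. A secondary subtlety is that two descending trajectories in $\NBFM(d,p)$ can in principle visit the same vertex, sharing an activation, but the probability of this is $O(1/d)$ per step and so disappears in the $d \to \infty$ limit.
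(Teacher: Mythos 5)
There is a genuine gap, and it sits in the recurrence direction. Your recursion for $R_n$ treats every descent step of an active frog as activating a fresh, independent frog — but in $\NBFM(d,p)$ each vertex holds one dormant frog that can be woken only once, so the progeny of different active frogs are neither independent nor disjoint. Your recursion is therefore the first-moment equation of the limiting branching random walk $\NBBRW(p)$, not of the frog model at finite $d$, and the claim that $p_d'$ equals the zero of $\Delta(p,d)$ for each finite $d$ is unjustified. The remark that collisions occur with probability $O(1/d)$ per step does not repair this: the number of active particles grows exponentially in time, so a per-step $O(1/d)$ bound gives no uniform-in-time control. The paper's \thref{lem:lb} only couples $\SFM(d,p)$ with $\NBBRW(p)$ for a \emph{fixed finite} number of steps (where the $O(1/d)$ bound is legitimate) and then hands off to a bootstrap; your argument needs the coupling over an infinite horizon.

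The second, more fundamental, problem is that even where the recursion is exact, your complex-root/oscillation argument only shows that the \emph{expected} number of root visits is infinite, whereas $p_d'$ is defined through $\P(V_{\NBFM(d,p)}=\infty)=1$. Infinite expectation does not imply almost-sure recurrence, and for frog models this implication is known to be the hard part. The entire machinery of Section~2 — the self-similar frog model, the operator $\A$, Poisson thinning, and the iteration $\A^{(n)}\Poi(\lambda_0)\succeq\Poi(\lambda_0+\epsilon n)$ in \thref{prop:suff} — exists precisely to convert a quantitative one-step gain into almost-sure recurrence; your proposal has nothing playing that role. (Even for the pure branching random walk, the paper must invoke homogeneity results of Machado--Popov and Comets--Menshikov--Popov to pass from $\E[V]=\infty$ to recurrence in \thref{lem:nbBRW}.) What your proposal does get right is the algebra at the critical point: your discriminant condition $5\hat p^2-6\hat p+1=0$ is the same quadratic $1-6\hat p+5\hat p^2$ appearing in \thref{lem:nbBRW}, correctly identifying $\hat p=1/5$, i.e.\ $p=1/6$, as the threshold of the limiting branching random walk; and the transience half (finite first moment implies transience, with the frog model dominated by the branching walk) is sound in spirit, though the paper still needs the reflected process $\RNBBRW(p)$ and a careful coupling in \thref{lem:reflection} and \thref{lem:transient} to handle the mismatch between $p^*$ and $\hat p$.
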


\thref{thm:lim} is used to derive \thref{thm:limsup}. \thref{thm:lim} is a valuable contribution in and of itself since it suggests the truth of \eqref{eq:lim}. Indeed, the intuitive limit $\NBFM(\infty ,p)$ is a branching process $\NBBRW(p)$ in which particles move towards the root with probability $\hat p$ and do not branch for some geometric distributed number of steps, after which they move away from the root branching into two particles at each step. It follows from \thref{lem:nbBRW} that $\NBBRW(p)$ has critical drift $1/6$, thus $p_d'$ converges to its intuitive limit. Note that $\NBFM(p)$ also exhbits a linearly expanding ball of visited sites when the initial particle density is high enough \cite{hoffman2019infection}. So, the ``shadow of doubt" mentioned earlier from this macroscopic effect does not seem to effect convergence of the critical drift.

Another benefit of of \thref{thm:lim} is that it provides useful guidance on where \emph{not} to direct future efforts towards establishing \eqref{eq:lim}. All proofs that we know of for recurrence of a frog model on an infinite tree did so by proving that a non-backtracking sub-process is recurrent. Since $q^* < 1/6$, our result suggests that any argument using a non-backtracking frog model will fall short of proving that $p_d \to q^*$. Some new type of argument that engages directly with $\FM(d,p)$ appears to be needed.

The arguments we employ to upper bound $p_d$ and $p_d'$ use approximations to the frog model that are less recurrent. To get a sense of how much precision is lost, we conducted some numerical simulations to estimate $p_3, p_3',$ and $p_4'$. We found that
\begin{align}
p_3\approx0.25;\qquad\qquad p'_3\approx0.2725;\qquad\qquad p'_4\approx0.246.
\label{eq:sims}
\end{align}
Details are in Section~\ref{sec:sims}.  In~\cite{hoffman2017recurrence}, it was conjectured that $\FM(3,\frac{1}{4})$ i.e., the frog model with simple random walks, is recurrent. So, under this assumption $p_3=\frac{1}{4}$.  Our data gives more support to this conjecture (see Figure~\ref{fig:fmd3}). The values of $p'_3$ and $p'_4$ are within about $.02$ of the bounds from \thref{thm:sup} and \thref{cor:4}. This suggests that the  our proofs do not sacrifice much accuracy. It is also interesting to see the (simulated) discrepancy between $p_3$ and $p_3'$ (about $.0225$) that results from restricting to non-backtracking random walk paths. 




\subsection{Overview of proofs}
The proof that $\sup_{d \geq 3}p_d \leq 1/3$ from \cite{guo2022minimal} followed the blueprint from \cite{hoffman2017recurrence}. 
The calculations in \cite{hoffman2017recurrence} were involved, and became much more complex in the generalization in \cite{guo2022minimal}.  We work with a Poisson-distributed number of dormant particles per site. Poisson thinning makes many intricate dependencies vanish. A comparison result from \cite{johnson2018stochastic} lets us convert our findings back to the one particle per site setting of $\FM(d,p)$.

Our main tool is a self-similar frog model $\SFM(d,p)$ that embeds in the usual frog model so that it has fewer root visits. We denote by $V_{\SFM(d,p)}$ the number of root visits in $\SFM(d,p)$. It was observed in \cite{hoffman2016transience} that $V_{\SFM(d,p)}$ satisfies a recursive distributional equation in the simple random walk setting. A similar equation holds for arbitrary $p$. The equation relates $V_{\SFM(d,p)}$ to $1+U$ thinned independent copies of $V_{\SFM(d,p)}$, where $U$ is the number of leaves visited in a frog model on a star graph (see Figure~\ref{fig:Aasystem}). In \thref{prop:suff}, we reduce proving recurrence to finding a stochastic lower bound for $U$ whose Laplace transform satisfies a certain inequality.

\thref{cor:4} uses the same approach as \thref{thm:sup}, and \thref{thm:limsup} follows immediately from \thref{thm:lim}. The proofs of \thref{thm:sup} and \thref{thm:lim} come down to constructing the right stochastic lower bound for $U$. For \thref{thm:sup}, we modify what occurs on $\mathbb T_d$ to resemble the setting with $d=3$. 
For \thref{thm:lim}, we leverage the fact that when the drift is fixed, we do not need many leaves of the star graph to be visited in order to satisfy \thref{prop:suff}.
The arguments presented are not simple rehashes of past techniques. The stochastic lower bounds are novel and tailored to $\FM(d,p)$. 
See \thref{rem:balance} for more about the difficulties.

Another ingredient in the proof of \thref{thm:lim} is connecting $\NBFM(d,p)$ with its intuitive limiting multitype branching random walk. This substantial endeavor is a technical contribution. Section~\ref{sec:NBBRW} defines the multitype branching random walk and then works out its transience and recurrence properties. The main thrust is extending results from \cite{machado2001recurrence} to our setting. The transience/recurrence criteria in \thref{lem:VnbBRW} are novel and may be of future use for the study of multitype branching random walks. 

\subsection{Organization} In Section~\ref{sec:ssfm}, we define the self-similar frog model and deduce some of its properties. This culminates with a sufficient condition for recurrence of $\SFM(d,p)$ given at \thref{prop:suff}. We use this in Section~\ref{sec:sup} to prove \thref{thm:sup}. 
 Section~\ref{sec:NBBRW} gives transience and recurrence conditions for a multitype branching random walk and relate them back to the frog model. In Section~\ref{sec:lim}, we prove \thref{thm:limsup}, which has \thref{thm:lim} as an immediate corollary. Finally, in Section~\ref{sec:sims}, we provide some numerical simulations that complement our results. 

\subsection{Acknowledgements} We are grateful to the authors of~\cite{hoffman2017recurrence} whose code formed the basis of the simulations performed in Section~\ref{sec:sims}. We would also like to thank Serguei Popov for sending us an electronic copy of \cite{comets1998one} whose result is applied in the proof of \thref{{lem:nbBRW}}.

\section{The self-similar frog model and associated operator} \label{sec:ssfm}

First a few remarks on notation. We abbreviate the Poisson distribution with mean $\lambda$ by $\Poi(\lambda)$. Given two nonnegative random variables $X$ and $X'$, we say that $X$ is stochastically smaller than $X'$ if $\P(X \geq a) \leq \P(X' \geq a)$ for all $a \geq 0$. We will denote this by $X \preceq X'$. Similarly, given two probability measures $\pi$ and $\pi'$ on $[0,\infty]$ we say that $\pi \preceq \pi'$ if $\pi((a,\infty)) \leq \pi'(a,\infty)$ for all $a \geq 0$. 



\subsection{The process}

The \emph{self-similar frog model} $\SFM(d, p)$ has particles follow the same type of non-backtracking random walks as in $\NBFM(d,p)$ with some key amendments. The first modification is that we replace the single dormant particle at each site with independent $\Poi(1)$-distributed numbers of particles. When an active particle visits a site with dormant particles, all dormant particles there become active. The additional modification is that particles moving away from the root are killed upon visiting a vertex that has already been visited. If multiple active particles attempt to move away from the root to the same unvisited vertex, then one is chosen to continue its path and the others are killed. Let $V_{\SFM(d,p)}$ denote the total number of root visits in $\SFM(d,p)$. Notice that $\SFM(d,p)$ is defined with a Poisson distributed initial configuration of dormant particles. We use a result from \cite{johnson2018stochastic} to show that this can be compared to prove recurrence of $\FM(d,p)$.

\begin{lemma} \thlabel{lem:so}
    If $\SFM(d,p)$ is recurrent, then $\NBFM(d,p)$ is recurrent. If $\NBFM(d,p)$ is recurrent, then $\FM(d,p)$ is recurrent.
\end{lemma}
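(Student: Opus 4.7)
The plan is to establish the two implications separately by coupling arguments. For the easier direction, $\NBFM(d,p)$ recurrent $\Rightarrow$ $\FM(d,p)$ recurrent, I would couple each $p$-biased random walk in $\FM(d,p)$ with a non-backtracking trajectory in $\NBFM(d,p)$ so that the non-backtracking trajectory tracks the sequence of \emph{first}-visited vertices of the biased walk on $\mathbb T_d$. The parameters $p^*$ and $\hat p$ in \eqref{eq:p*} are precisely calibrated to make this coupling distributionally exact: by a one-step decomposition of the $p$-biased walk, the probability that a walker on $\mathbb T_d$ ever reaches the parent of its starting vertex is the smaller root of $(1-p)\alpha^2 - \alpha + p = 0$, namely $\alpha = \hat p$, and $p^*$ plays the analogous role for the very first move of a just-activated frog (whose marginal differs because the activator has just arrived). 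Under such a coupling, every new vertex visited by a frog in $\NBFM$ is also visited by the coupled frog in $\FM$, so every activation and every root visit in $\NBFM(d,p)$ lifts to the corresponding event in $\FM(d,p)$.

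For $\SFM(d,p)$ recurrent $\Rightarrow$ $\NBFM(d,p)$ recurrent, I would introduce an intermediate process $\SFM^+(d,p)$: the non-backtracking frog model with $\Poi(1)$ dormant frogs per site and with \emph{no} killing rules, i.e., $\SFM(d,p)$ minus the kill-on-revisit and single-survivor-at-collision rules. A direct coupling in which we keep all particles alive and continue their non-backtracking walks yields $V_{\SFM(d,p)} \leq V_{\SFM^+(d,p)}$, so recurrence transfers to $\SFM^+(d,p)$. The remaining gap is the initial configuration: $\SFM^+(d,p)$ has $\Poi(1)$ frogs per site while $\NBFM(d,p)$ has one. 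Here the stochastic comparison from \cite{johnson2018stochastic} applies, since a deterministic distribution dominates a Poisson distribution of the same mean in the increasing concave order that is shown there to govern the number of root visits for frog models. Applying this with mean $1$ yields $V_{\SFM^+(d,p)} \preceq V_{\NBFM(d,p)}$, closing the chain.

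I expect the principal obstacle to be verifying the coupling in the first implication: one must show rigorously that the excursion probabilities of the $p$-biased walk on $\mathbb T_d$ match exactly the marginal dynamics prescribed by $p^*$ and $\hat p$, and in particular that $p^*$ is the correct probability for the distinguished first move of a just-activated frog, whose initial state (having just been reached by the activator) differs from a frog already in motion. A smaller secondary subtlety is confirming that the form of the stochastic comparison in \cite{johnson2018stochastic} is general enough to cover the non-backtracking variant $\SFM^+(d,p)$ rather than only frog models driven by standard random walks.
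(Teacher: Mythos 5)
Your overall architecture coincides with the paper's proof, which consists of two citations: the construction in \cite[Section 2]{guo2022minimal} exhibiting $\SFM(d,p)$ as a restriction of $\NBFM(d,p)$ and $\NBFM(d,p)$ as a restriction of $\FM(d,p)$, together with \cite[Corollary 5]{johnson2018stochastic} to pass from $\Poi(1)$ initial conditions to one frog per site. Your handling of the second implication --- the intermediate process $\SFM^+(d,p)$, monotonicity under removal of the killing rules, and the comparison of a point mass at $1$ with $\Poi(1)$ via the order studied in \cite{johnson2018stochastic} --- is exactly the intended argument, and that comparison theorem is stated for general frog models, so your worry about the non-backtracking variant is resolvable.

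The genuine problem is in your description of the coupling for the first implication. First, the sequence of first-visited vertices of a $p$-biased walk on $\mathbb T_d$ is not a non-backtracking path, nor even a walk on the tree: if the walk goes $v \to w \to v \to u$ with $w$ a child and $u$ the parent of $v$, the new vertices appear in the order $v,w,u$, and $w$ and $u$ are not adjacent. The embedding in \cite{guo2022minimal} (following \cite{hoffman2017recurrence}) instead extracts from the transient trajectory a genuine non-backtracking path, essentially the ascent to the minimal depth ever attained followed by the escape ray, with further pruning to respect the activation structure. Second, and more importantly, the coupling is not ``distributionally exact'' and the parameters are not calibrated to make it so: for finite $d$ one has $p^* < \hat p = \P(\text{the walk ever reaches its parent})$, so the trace of a just-activated frog is sent toward the root strictly \emph{less} often than the underlying walk actually gets there; moreover $V_{\NBFM(d,p)}$ counts each particle at most once while a single frog in $\FM(d,p)$ may visit the root many times. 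The construction deliberately discards root visits so that only the one-sided domination $V_{\NBFM(d,p)} \preceq V_{\FM(d,p)}$ holds, and that one-sided bound is all the implication requires. As written, your plan would stall at the exactness claim; reframed as a sub-process embedding it becomes the cited construction.
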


\begin{proof}
 \cite[Corollary 5]{johnson2018stochastic} states that recurrence of a frog model with Poisson initial conditions implies recurrence of the same model with one particle per site. The result follows from this and the construction in \cite[Section 2]{guo2022minimal}. The construction explains how $\SFM(d,p)$ is a restriction of $\NBFM(d,p)$, which is a restriction of $\FM(d,p)$. 
\end{proof}
    
\subsection{The operator} \label{sec:A}
\begin{figure}
  \begin{center}
\mbox{
\subfigure
{
\begin{tikzpicture}[scale = .75]
\draw (-5,3) -- node[below] {$\varnothing$} ( -4,3) -- node (root) {} ( -4,4) -- (-5,4) -- cycle; 

\draw[thick, fill = red] (-2.5,2.5) -- node[below] {$\varnothing'$} (-1.5,2.5) -- node (root') {} (-1.5,3.5) -- (-2.5,3.5) -- node (root'2) {} (-2.5,2.5) --cycle;

\draw[fill = blue] (0,5) --node[below] {$v_d$} (1,5) -- (1,6) -- (0,6) -- node (vd) {} (0,5)--cycle;

\node at (.5,4) {$\vdots$};

\draw[fill = blue] (0,2) --node[below] {$v_2$} (1,2) -- (1,3) -- (0,3) -- node (v2) {}  (0,2)--cycle;

\draw[thick,fill = red] (0,0) --node[below] {$v_1$} (1,0) -- (1,1) -- (0,1) --  node (v1) {}  (0,0)--cycle;

\draw[thick] (root') -- (vd);
\draw[thick] (root') -- (v2);
\draw[thick] (root') -- (v1);
\draw[thick] (root'2) -- (root);

\end{tikzpicture}
}

\subfigure{

\begin{tikzpicture}[scale = .75]
\draw[thick] (root'2) -- (root);
\draw (-5,3) -- node[below] {$\varnothing$} ( -4,3) -- node (root) {} ( -4,4) -- (-5,4) -- cycle; 
\draw[fill = red] (-5,3) -- ( -4.7,3) -- node (root) {} ( -4.7,4) -- (-5,4) -- cycle; 
\draw[fill = blue] (-4,3) -- ( -4.7,3) -- node (root) {} ( -4.7,4) -- (-4,4) -- cycle; 

\draw[thick] (-2.5,2.5) -- node[below] {$\varnothing'$} (-1.5,2.5) -- node (root') {} (-1.5,3.5) -- (-2.5,3.5) -- node (root'2) {} (-2.5,2.5) --cycle;

\draw[fill = blue] (0,5) --node[below] {$v_d$} (1,5) -- (1,6)  -- (0,6) -- node (vd) {} (0,5)--cycle;

\node at (.5,4) {$\vdots$};

\draw[thick] (0,2) --node[below] {$v_2$} (1,2) -- (1,3) -- (0,3) -- node (v2) {}  (0,2)--cycle;

\draw[thick] (0,0) --node[below] {$v_1$} (1,0) -- (1,1) -- (0,1) --  node (v1) {}  (0,0)--cycle;

\draw[thick] (root') -- (vd);
\draw[thick] (root') -- (v2);
\draw[thick] (root') -- (v1);

\node at (-6,6) {};
\end{tikzpicture}
 }
}
  \end{center}
  \caption{The self-similar frog model operator. Red sites contain particles that are initially active and blue sites contain initially dormant particles. $\A \pi$ is the law for the number of particle frozen at $\varnothing$ when the process fixates and $U$ is the number of vertices among $v_2,\hdots,v_d$ that are ever visited. Empty boxes on the right at $v_1,\hdots, v_d$ represent sites whose particles were activated.}
  \label{fig:Aasystem}
\end{figure}
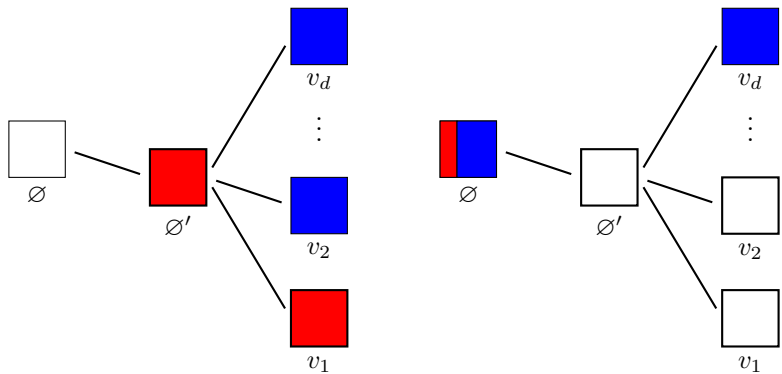

Given a probability measure $\pi$ on the nonnegative integers, we define  $\A \pi$ to be the \emph{self-similar frog model operator}. It is obtained from the following \emph{auxiliary process}. 

Consider a star graph with root $\varnothing$, central vertex $\varnothing '$, and leaves $v_1, \hdots, v_d$ (see Figure \ref{fig:Aasystem}). There is a $\Poi(1)$ number of active particles at $\varnothing'$ and a $\pi$-distributed number of active particles at $v_1$. Independent $\pi$-distributed numbers of dormant particles are placed at $v_2,\hdots, v_d$. 

The active particles started at $\varnothing'$ move to $\varnothing$ independently with probability $p^*$ and otherwise each moves to an independently and uniformly sampled vertex from $v_1,\hdots, v_d$. Active particles at $v_i$ move to $\varnothing '$ with probability $1$, and then to either $\varnothing$ with probability $\hat p$ or otherwise to a uniformly sampled vertex among $\{v_1,\hdots, v_d\} \setminus \{v_i\}$. Whenever active particles encounter dormant particles, the dormant particles become active. When a particle moves to a leaf, it remains frozen there for all subsequent time steps. 

Take $\A \pi$ to be the law for the total number of particles frozen at $\varnothing$ when the process fixates. Also, let $\mathcal U \pi$ be law for the total number of $v_2, \hdots, v_d$ that are ever visited by an active particle. We define $U= U(d,p,\lambda)$ to be a random variable with distribution $\mathcal U \Poi(\lambda)$. 

\subsection{Properties of $\A$}

The following facts state that the law of $V_{\SFM(d,p)}$ is a fixed point of $\A$, that $\A$ is monotone, and that $\A \Poi(\lambda)$ has a particularly nice representation. We also state \cite[Theorem 3.1 (b)]{misra2003stochastic} for comparing a Poisson random variable to one with a random parameter. We omit the proofs because they arise almost immediately from the construction and analogues have been observed in \cite{hoffman2016transience, johnson2016critical, junge2022stochastic}. We will abuse notation and write $\mathcal A V_{\SFM(d,p)}$ and $\mathcal A \Poi(\lambda)$ to represent the operator $\mathcal A$ applied to the associated probability measure.

\begin{fact}\thlabel{lem:fix}
$\A V_{\SFM(d,p)}\overset{d} = V_{\SFM(d,p)}$.
\end{fact}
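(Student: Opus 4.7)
The plan is to couple $\SFM(d,p)$ with the auxiliary process defining $\A V_{\SFM(d,p)}$ so that the count of particles at $\varnothing$ agrees almost surely under the coupling. The key ingredient is a self-similar recursion for $\SFM(d,p)$: in the subprocess induced on the subtree rooted at any child $c$, started from one just-arrived active particle at $c$ together with the activated $\Poi(1)$ dormants there, the total count of particles that ever cross from $c$ up to its parent is, by translation invariance of the tree and the dynamics, distributed as $V_{\SFM(d,p)}$ itself.

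To build the coupling, I would run $\SFM(d,p)$ and match components of its state to the auxiliary process on the star graph with leaves $v_1, \dots, v_d$. The initial particle moves from $\varnothing$ to a uniformly chosen child, identified with $\varnothing'$, and activates the $\Poi(1)$ dormants there; these play the role of the $\Poi(1)$ active particles at $\varnothing'$ in the auxiliary process. The initial particle itself continues away from the root to a uniform child of $\varnothing'$, identified with $v_1$, where it triggers a fresh self-similar subprocess; by the recursion above, the total number of particles returning from this subtree up to $\varnothing'$ is $V_{\SFM(d,p)}$-distributed, matching the active particles at $v_1$ in the auxiliary process. For $i \geq 2$, the subtree at $v_i$ is activated only if some particle moves from $\varnothing'$ into $v_i$, and when activated it produces an independent $V_{\SFM(d,p)}$-distributed number of particles returning to $\varnothing'$, matching the dormants at $v_i$.

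The dynamics at $\varnothing'$ under this coupling reproduce the auxiliary process exactly: each just-activated dormant moves to $\varnothing$ with probability $p^*$ or to a uniform child, and each particle arriving at $\varnothing'$ from a subtree below then moves to $\varnothing$ with probability $\hat p$ or to a uniform other child with probability $1-\hat p$. The auxiliary process's ``freeze on leaf arrival'' rule corresponds to the SFM rule killing particles that move away from the root into an already-visited vertex: in both, the first arrival to a leaf triggers activation and contributes its $V_{\SFM(d,p)}$-distributed emergence, while any subsequent arrival adds nothing. Consequently the number of particles at $\varnothing$ in the two processes agrees, yielding the distributional identity. The only subtle point is independence of the subtree contributions across different $v_i$, which is immediate from Poisson thinning: the $\Poi(1)$ dormants at distinct vertices are independent, so the subprocesses rooted at distinct children of $\varnothing'$ are independent copies of each other.
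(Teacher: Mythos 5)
Your argument is correct and is precisely the self-similar decomposition the paper has in mind: the statement is given as a Fact whose proof is omitted because it ``arises almost immediately from the construction,'' and your coupling of the star-graph auxiliary process with the first two levels of $\SFM(d,p)$, using the recursion for the emergence counts from the subtrees at the children of $\varnothing'$, is that construction spelled out. One cosmetic remark: the independence of the subtree contributions follows from the disjointness of the subtrees and the independence of their $\Poi(1)$ initial configurations (together with fresh walk randomness), rather than from Poisson thinning as such.
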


\begin{fact} \thlabel{lem:mono}
If $\pi \preceq \pi'$, then $\A \pi \preceq \A \pi'$.
\end{fact}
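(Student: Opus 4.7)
The plan is to establish \thref{lem:mono} via a monotone coupling of the two auxiliary processes defining $\A\pi$ and $\A\pi'$. Since the operator $\A$ is built from a finite-time dynamical system on the star graph with no killing (particles only get frozen at leaves or absorbed at $\varnothing$), monotonicity should be essentially a bookkeeping exercise, and I will aim for a short, coupling-based argument.

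First I would use Strassen's theorem to assert that $\pi \preceq \pi'$ yields, for each $i = 1, \ldots, d$ independently, a coupling of random variables $(N_i, N_i')$ with $N_i \sim \pi$, $N_i' \sim \pi'$, and $N_i \leq N_i'$ almost surely. Interpret $N_i$ (resp.\ $N_i'$) as the number of particles initially placed at leaf $v_i$ in the unprimed (resp.\ primed) auxiliary process, where the particles at $v_1$ are active and those at $v_2, \ldots, v_d$ are dormant. Use the same realization of the $\Poi(1)$ initial configuration at $\varnothing'$ in both processes.

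Next I would build the coupled dynamics by identifying, at each leaf $v_i$, the $N_i$ particles of the unprimed process with $N_i$ of the $N_i'$ particles in the primed process, and assigning them identical random walk trajectories. The extra $N_i' - N_i$ particles in the primed process receive independent trajectories. I then verify by induction on time that the set of visited vertices in the unprimed process is always a subset of the set visited in the primed process, and that every ``shared'' particle performs exactly the same sequence of moves in both processes until it is either frozen at $\varnothing$ or at some leaf. The crucial point is that the dynamics are activation-only: a dormant particle in the primed process can be activated no later than its counterpart in the unprimed process (since every visit in the unprimed process also occurs in the primed process under the coupling), and the presence of extra particles never inhibits any move of a shared particle on a star graph. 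Hence the total number of particles frozen at $\varnothing$ is pointwise at least as large in the primed process, yielding $\A\pi \preceq \A\pi'$.

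The main obstacle is verifying, carefully but tersely, that the shared particles' trajectories really do coincide in both processes; this reduces to observing that the random walk laws on the star graph depend only on local history (the particle's previous step) and not on the overall particle count, so extra particles introduce no side effects. Given that the authors state these facts are omitted because ``they arise almost immediately from the construction,'' I expect the published treatment, if any, to be a single sentence invoking this coupling; the plan above simply records the coupling precisely enough to be checked by the reader.
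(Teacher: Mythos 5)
Your coupling argument is correct and is exactly the argument the authors have in mind: the paper omits the proof of this fact, stating it ``arises almost immediately from the construction,'' and the intended justification is precisely the monotone coupling you describe (pre-assign each potential particle its trajectory, note the auxiliary process has no killing, and check by induction that the primed process activates a superset of particles). Your write-up fills in the omitted details accurately, so no further comparison is needed.
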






\begin{fact}\thlabel{lem:poi}
 $\mathcal A \Poi(\lambda) \overset{d} = \Poi( p^* + \hat p(1+U) \lambda).$
\end{fact}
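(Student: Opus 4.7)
The strategy is to exhibit $\mathcal{A}\Poi(\lambda)$ as a mixed Poisson by applying Poisson thinning to every random choice in the auxiliary process. I would first split the $\Poi(1)$ particles initially at $\varnothing'$: thinning by their first step produces independent piles of size $\Poi(p^*)$ heading directly to $\varnothing$ and $\Poi((1-p^*)/d)$ heading to each leaf $v_j$.

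Next I would perform an analogous thinning at each leaf. By the memoryless property of Poisson, the destinations of the $\Poi(\lambda)$ particles at $v_i$ may be sampled in advance, even before it is known whether $v_i$ is ever activated. Concretely, introduce independent Poissons $M_i^{(0)} \sim \Poi(\hat p\lambda)$ (destined for $\varnothing$) and $M_i^{(j)} \sim \Poi((1-\hat p)\lambda/(d-1))$ for $j \neq i$ (which will freeze at $v_j$ upon activation of $v_i$). With these piles pre-sampled, one obtains an equivalent realization of the auxiliary process in which $M_i^{(\cdot)}$ is merely revealed when $v_i$ is first visited.

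The key observation is that the visited set $V \subseteq \{v_2, \ldots, v_d\}$, and hence $U = |V|$, depends only on the initial $\varnothing'$-to-leaf Poissons and on the $M_i^{(j)}$ with $j \neq 0$. Thus $V$ is independent of both $\{M_i^{(0)}\}_{i=1}^{d}$ and the $\Poi(p^*)$ pile heading from $\varnothing'$ straight to $\varnothing$. The total count at $\varnothing$ then equals $\Poi(p^*) + M_1^{(0)} + \sum_{i \in V} M_i^{(0)}$, with $M_1^{(0)}$ always present because $v_1$ is activated at time zero. Conditional on $U$, this is the sum of $2+U$ independent Poissons with parameter $p^*$ and with $1+U$ copies of parameter $\hat p \lambda$, which gives $\Poi(p^* + \hat p(1+U)\lambda)$ and proves the claim.

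The main point requiring care is this decoupling: justifying that the $M_i^{(j)}$ may be sampled up front and that the resulting $V$ is independent of the family $\{M_i^{(0)}\}$. This follows from standard Poisson thinning combined with the fact that each activated particle makes only a single visit to $\varnothing'$ before freezing, so its destination is decided in one independent step. Once this is in hand, the rest of the argument is routine arithmetic.
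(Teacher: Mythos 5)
Your argument is correct and is precisely the Poisson-thinning decomposition that the paper has in mind when it says the fact ``arises almost immediately from the construction'' (the paper omits the proof entirely). Pre-sampling the destination piles, observing that the visited set $V$ is a function only of the leaf-bound piles and hence independent of the $\varnothing$-bound piles, and summing $\Poi(p^*)$ with the $1+U$ independent $\Poi(\hat p\lambda)$ contributions is exactly the intended reasoning, so your write-up simply supplies the omitted details.
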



\begin{fact} \thlabel{lem:compare}
Suppose that $Y \sim \Poi(\Theta)$ and $Z \sim \Poi(\lambda)$ with $\Theta$ a nonnegative random variable and $\lambda \geq 0$. By \cite[Theorem 3.1 (b)]{misra2003stochastic}, the following are equivalent:
$$Y \succeq Z \iff \P(Y =0 )\leq \P(Z=0) \iff \E[e^{-\Theta}] \leq e^{-\lambda}.$$
\end{fact}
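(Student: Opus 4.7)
The plan is to handle the two non-trivial equivalences in turn. The middle one, $\P(Y = 0) \leq \P(Z = 0) \iff \E[e^{-\Theta}] \leq e^{-\lambda}$, is immediate from the tower property: conditioning on $\Theta$ gives $\P(Y = 0) = \E[\P(\Poi(\Theta) = 0 \mid \Theta)] = \E[e^{-\Theta}]$, while $\P(Z = 0) = e^{-\lambda}$ is just the Poisson mass function at zero. The forward implication $Y \succeq Z \Rightarrow \P(Y = 0) \leq \P(Z = 0)$ is the definition of stochastic dominance evaluated at level one, since $\P(\cdot \geq 1) = 1 - \P(\cdot = 0)$.

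The substantive step is the converse implication: assuming $\E[e^{-\Theta}] \leq e^{-\lambda}$, deduce $\P(Y \geq k) \geq \P(Z \geq k)$ for every $k \geq 1$. My approach is to fix $k$ and show that $g_k(\theta) := \P(\Poi(\theta) \geq k)$, viewed as a function of the transformed variable $u := e^{-\theta} \in (0,1]$, is convex and decreasing. For $k = 1$ this is transparent since $g_1 = 1 - u$ is affine; for $k \geq 2$ a short derivative computation (using $\tfrac{d}{du} = -e^{\theta}\tfrac{d}{d\theta}$) confirms both properties.

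With this in hand, Jensen's inequality applied to the random variable $U := e^{-\Theta}$ yields $\E[g_k(\Theta)] \geq g_k\bigl(-\log \E[U]\bigr)$, and the hypothesis rearranges to $-\log \E[U] \geq \lambda$. Since $g_k$ is strictly increasing in $\theta$, we obtain $g_k(-\log \E[U]) \geq g_k(\lambda)$, and chaining these inequalities gives $\P(Y \geq k) \geq \P(Z \geq k)$, which is stochastic dominance.

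The main obstacle is the convexity of $g_k$ in the variable $u$; this is the only place the specific structure of the Poisson family enters, and it is what makes the Laplace transform the natural object to compare. As an alternative, one could bypass this calculation entirely by invoking \cite[Theorem 3.1 (b)]{misra2003stochastic}, which gives exactly the stated equivalence.
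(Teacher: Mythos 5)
Your argument is correct, and it is genuinely different from what the paper does: the paper offers no proof at all, simply invoking \cite[Theorem 3.1 (b)]{misra2003stochastic} (this Fact is among those the authors explicitly decline to prove), whereas you supply a self-contained derivation. Your two easy equivalences are exactly right, and the substantive direction goes through as claimed: writing $g_k(\theta)=\P(\Poi(\theta)\geq k)$ and $h_k(u)=g_k(-\log u)$, one has $h_k'(u)=-(-\log u)^{k-1}/(k-1)!$ and $h_k''(u)=(-\log u)^{k-2}/\bigl((k-2)!\,u\bigr)\geq 0$ for $k\geq 2$ (and $h_1(u)=1-u$ is affine), so $h_k$ is indeed nonincreasing and convex on $(0,1]$; Jensen applied to $U=e^{-\Theta}$ then gives $\P(Y\geq k)=\E[h_k(U)]\geq h_k(\E[U])\geq h_k(e^{-\lambda})=\P(Z\geq k)$. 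What your route buys is transparency: the reader sees precisely why the Laplace transform (equivalently the mass at zero) is the right one-parameter summary for stochastic comparison against a fixed-mean Poisson, namely convexity of the Poisson tail in the variable $e^{-\theta}$. What the citation buys is brevity and a pointer to the general mixed-Poisson comparison theory. One small point worth a sentence in a written version: if $\Theta$ is permitted to take the value $+\infty$, set $U=0$ there and note $h_k(0^+)=1$, so convexity and the Jensen step extend to $[0,1]$ without change.
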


We apply these facts to give a sufficient condition for recurrence.

\begin{proposition} \thlabel{prop:suff}
   Suppose that $V_{\SFM(d,p)} \succeq \Poi(\lambda_0)$. Let $U' = U'(d,p,\lambda)$ be a family of random variables indexed by $\lambda$ with $U'(d,p,\lambda) \preceq U(d,p,\lambda)$ for all $\lambda \leq \lambda_0$. 
   
   If there exists $\epsilon >0$ such that for all $\lambda \geq \lambda_0$
    \begin{align}
         \E[e^{- p^* - \hat p (1+U')\lambda}] \leq e^{-\lambda - \epsilon},\label{eq:suff}
    \end{align}
    then $\SFM(d,p)$, $\NBFM(d,p)$, and $\FM(d,p)$ are recurrent.
\end{proposition}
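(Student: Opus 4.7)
The plan is to iterate the operator $\A$ starting from $\Poi(\lambda_0)$ and show that the resulting Poisson parameter can be made arbitrarily large, which forces $V_{\SFM(d,p)} = \infty$ almost surely. Once that is established, \thref{lem:so} promotes recurrence of $\SFM(d,p)$ to recurrence of $\NBFM(d,p)$ and $\FM(d,p)$.

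First I would use \thref{lem:poi} to write $\A \Poi(\lambda) \overset{d}{=} \Poi(\Theta)$ with $\Theta \coloneqq p^* + \hat p (1+U)\lambda$. Since $x \mapsto e^{-\hat p (1+x)\lambda}$ is decreasing and $U'\preceq U$,
$$\E[e^{-\Theta}] \leq \E[e^{-p^* - \hat p (1+U')\lambda}] \leq e^{-\lambda - \epsilon},$$
where the last inequality is the hypothesis \eqref{eq:suff}. By \thref{lem:compare}, this yields $\A \Poi(\lambda) \succeq \Poi(\lambda + \epsilon)$ for every $\lambda \geq \lambda_0$.

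Next I would iterate. By \thref{lem:mono} and induction on $n$, $\A^n \Poi(\lambda_0) \succeq \Poi(\lambda_0 + n\epsilon)$. Combining the assumption $V_{\SFM(d,p)} \succeq \Poi(\lambda_0)$ with monotonicity of $\A$ and the fixed-point identity $V_{\SFM(d,p)} \overset{d}{=} \A^n V_{\SFM(d,p)}$ from \thref{lem:fix} gives
$$V_{\SFM(d,p)} \succeq \A^n \Poi(\lambda_0) \succeq \Poi(\lambda_0 + n\epsilon)$$
for every $n$. Sending $n \to \infty$, for each fixed $a \geq 0$ we have $\P(V_{\SFM(d,p)} \geq a) \geq \P(\Poi(\lambda_0 + n\epsilon) \geq a) \to 1$, so $V_{\SFM(d,p)} = \infty$ almost surely.

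The main difficulty is essentially absorbed into the four preceding facts, leaving the argument as bookkeeping once they are in hand. The subtlety worth double-checking is that the hypothesis \eqref{eq:suff} applies at each iterate $\lambda = \lambda_0 + k\epsilon$, which is fine because every such $\lambda$ lies in $[\lambda_0,\infty)$, and that the stochastic domination $U'\preceq U$ is in force at the values of $\lambda$ used during the iteration so that the decreasing-exponential estimate in the first displayed chain remains valid.
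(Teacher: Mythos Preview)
Your proof is correct and follows essentially the same route as the paper: use \thref{lem:poi} and \thref{lem:compare} together with $U'\preceq U$ to obtain $\A\Poi(\lambda)\succeq\Poi(\lambda+\epsilon)$, then iterate via \thref{lem:fix} and \thref{lem:mono} to get $V_{\SFM(d,p)}\succeq\Poi(\lambda_0+n\epsilon)$ and conclude with \thref{lem:so}. Your closing remark is apt: the hypothesis ``$U'\preceq U$ for all $\lambda\le\lambda_0$'' in the statement is a typo for $\lambda\ge\lambda_0$, since the iteration only ever uses parameters $\lambda\in[\lambda_0,\infty)$.
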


\begin{proof}
Let $X=p^* + \hat p(1+U)\lambda$ and $X' = p^* + \hat p(1+U') \lambda $. If \eqref{eq:suff} holds, then \thref{lem:compare} implies that $\Poi( X' ) \succeq \Poi(\lambda + \epsilon)$. \thref{lem:poi} and our assumption that $U' \preceq U$ then imply that for all $\lambda \geq \lambda_0$ $$ \A \Poi(\lambda)  \overset{d}= \Poi(X) \succeq \Poi(X') \succeq \Poi(\lambda+ \epsilon).$$

Starting with $V_{\SFM(d,p)} \succeq \Poi(\lambda_0)$ and iteratively applying \thref{lem:fix} and \thref{lem:mono} gives 
$$V_{\SFM(d,p)} = \mathcal A^{(n)} V_{\SFM(d,p)} \succeq  \A^{(n)} \Poi(\lambda_0) \succeq \Poi(\lambda_0 + \epsilon n)$$ for all $n \geq 0$. It follows that $P(V_{\SFM(d,p)}=\infty)=1$. As $\SFM(d,p)$ is recurrent, \thref{lem:so} ensures that so are $\NBFM(d,p)$ and $\FM(d,p)$.
\end{proof}




    




\begin{remark} \thlabel{rem:balance}
The random variables $U', \tilde U$, and $U''$ from Sections~\ref{sec:sup} and \ref{sec:lim} are carefully balanced to satisfy \eqref{eq:suff}. Expanding \eqref{eq:suff}, we would like to show that
$$\E[e^{-p^* - \hat p (1+U') \lambda}] = \sum_{u=0}^{d-1} e^{- p^* - \hat p (1+u) \lambda} \P(U' = u) \leq e^{-\lambda - \epsilon}.$$
For $1+u\geq 1 / \hat p$, the summands are much smaller than $e^{-\lambda}$. However, when $1+u < 1 / \hat p$, we need good bounds on the probability coefficients $\P(U'=u)$ to make up for the $e^{-\hat p(1+u) \lambda}$ terms being too large on their own. The balancing act is modifying $U$ to obtain a smaller random variable with a tractable distribution that does not sacrifice too much precision.

\end{remark}

\section{Proof of \thref{thm:sup}} \label{sec:sup}

Fix $p=5/17$ so that $p^* = (5d-5)/(12d-5) \text{ and } \hat p =  5/{12}.$
Note that $p^*$ is easily seen to be increasing in $d$. By taking $d=3$ and $d=\infty$ we have 
\begin{align}
 10/31 \leq p^* \leq 5/12 \qquad \text{ for all $d \geq 3$} \label{eq:p*b}.
\end{align}

By \thref{prop:suff}, it suffices to find a random variable $U' \preceq U(d,p,\lambda)$ and $\epsilon >0$ so that \eqref{eq:suff} holds for all $d \geq 3$. We define $U'$ to be the number of activated leaves in the following \emph{modified auxiliary process}. First, we reduce to a star graph with only three leaves $v_1', v_2', v_3'$. Second, the $\Poi(1)$ active particles at $\varnothing'$ move to $\varnothing$ with probability $10/31\leq  p^*$, away from $\varnothing$ to a uniformly sampled leaf with probability $7/12\leq 1-p^*$, and otherwise are immediately killed. Besides these changes, the process evolves in the same manner as the auxiliary process and runs until fixation. Then $U' \in \{0,1,2\}$ is how many of $\{v_2', v_3'\}$ are eventually visited by an active particle.

\begin{lemma} \thlabel{lem:coupon}
    $U' \preceq U(d,5/17, \lambda)$ for all $d\geq 3$.
\end{lemma}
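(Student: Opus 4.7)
My plan is to build an explicit coupling of $U'$ and $U = U(d, 5/17, \lambda)$ on a common probability space so that $U' \le U$ pointwise. The key inputs are the uniform bounds $p^* \ge 10/31$ and $1-p^* \ge 7/12$ from \eqref{eq:p*b}, valid for every $d \ge 3$; these inequalities are exactly what make the modified first-step probabilities a stochastically weaker replacement for the true $p^*$ and $1-p^*$.

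I would first couple the Poisson initial configurations: share the $\Poi(1)$ at $\varnothing'$, share the $\Poi(\lambda)$ populations at $v_i$ and $v_i'$ for $i = 1, 2, 3$, and place independent fresh $\Poi(\lambda)$'s at the extra leaves $v_4, \ldots, v_d$ of the original. Poisson thinning then decomposes the first-step particles: modified splits as $\Poi(10/31)$ (to $\varnothing$), $\Poi(7/36)$ (to each $v_i'$), and $\Poi(35/372)$ (killed); original splits as $\Poi(p^*)$ (to $\varnothing$) and $\Poi((1-p^*)/d)$ (to each $v_j$). By \eqref{eq:p*b}, one embeds the modified-to-$\varnothing$ Poisson as a sub-Poisson of the original-to-$\varnothing$ one, and each modified-to-$v_i'$ Poisson as a sub-Poisson of the original's leaf-Poissons together with a supplement drawn from the surplus intensity on $\{v_4, \ldots, v_d\}$ when $(1-p^*)/d < 7/36$ (i.e. $d \ge 4$). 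Bouncing at leaves is coupled analogously using the shared $\hat p$. Under this coupling, each activation of a leaf in $\{v_2', v_3'\}$ in the modified process is matched by an activation of a leaf in $\{v_2, \ldots, v_d\}$ in the original, giving $U' \le U$ a.s.

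The main obstacle is to ensure that when both $v_2'$ and $v_3'$ are activated via supplementary slices (i.e.\ through $v_4, \ldots, v_d$ in the coupling), the corresponding original leaf activations are at \emph{distinct} leaves of $\{v_2, \ldots, v_d\}$---otherwise $U' = 2$ could coexist with $U = 1$. This is handled by allocating each modified leaf's supplement to a disjoint sub-Poisson slice of the original's $\{v_4, \ldots, v_d\}$ intensity; the feasibility of this allocation reduces to showing that the total surplus $(d-3)(1-p^*)/d$ exceeds the total shortfall $7/12 - 3(1-p^*)/d$, which rearranges to $1 - p^* \ge 7/12$, again \eqref{eq:p*b}. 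Combined with $U' \le 2$, only pairwise conflicts must be controlled, so the disjoint-slice construction, propagated through the bouncing chains, suffices to establish the stochastic dominance.
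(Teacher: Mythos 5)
There is a genuine gap in the step you flag as ``the main obstacle.'' Your feasibility check for the disjoint supplements is carried out at the level of \emph{total} intensity (surplus $\geq$ shortfall, which indeed reduces to $1-p^*\geq 7/12$), but what the construction actually requires is that the two supplements be supported on \emph{disjoint sets of original leaves}: if the slices backing $v_2'$ and $v_3'$ live on the same leaf $v_j$, then on the event that both supplements fire and nothing else does, the modified process activates both $v_2'$ and $v_3'$ while the original has only visited $v_j$, and the cascade from $v_j$ reaches a second leaf of $\{v_2,\dots,v_d\}$ only with probability strictly less than one. For $d=4$ this is not a technicality that can be engineered away: each of $v_2'$, $v_3'$ needs intensity $7/36$, each original leaf carries only $(1-p^*)/4=7/43<7/36$, so each supplement set needs at least two leaves, while only three countable leaves $\{v_2,v_3,v_4\}$ are available --- leaf-disjoint allocation is impossible. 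Hence your coupling admits a positive-probability event with $U'=2$ and $U=1$, so it does not establish $U'\preceq U$ (the same problem recurs for the bounced particles from $v_1'$). The lemma is still true; it is your coupling that is not monotone.

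The paper avoids per-leaf intensity matching entirely by reordering the comparison: first it thins the leaf-bound probability from $1-p^*$ down to $7/12$ while \emph{keeping all $d$ leaves}, which manifestly only decreases the number of activated leaves and gives an intermediate variable $U_d'\preceq U(d,5/17,\lambda)$; only then does it compare $d$ leaves against $3$ leaves, where the total number of leaf-bound particles is identical in the two processes and the comparison is a coupon-collector statement (the same number of throws into more bins hits stochastically more distinct bins). If you want to keep your explicit-thinning framework, you would need to either adopt this two-stage order, or supplement your first-step matching with an argument that tracks the cascade (e.g.\ a direct verification of $\P(U'\geq 2)\leq \P(U\geq 2)$ for $d=4$), since the first-step allocation alone cannot be made conflict-free there.
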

\begin{proof}
We first consider an intermediate process in which the active particles started at $\varnothing'$ independently move away from $\varnothing'$ to a uniformly sampled vertex from $v_1,...,v_d$ with probability $7/12$ rather than $1-p^*$. Let $U_d' \in \{0,1,\hdots, d-1\}$ be the number of activated leaves with this modification. 
Since $7/12<1-p^*$, less particles are being sent to the leaves and thus $U_d' \preceq U(d,5/17, \lambda)$. Now the probabilities that active particles move towards the leaves are the same in both processes defining $U_d'$ and $U'$. From here, it is straightforward to couple the two random variables so that $U' \preceq U_d'$ for all $d \geq 3$. It basically amounts to showing that a coupon collecting process with $d$ versus $2$ coupons has (stochastically) more unique coupons discovered after sampling the same number of coupons in each process.  Thus, $U' \preceq U(d,5/17, \lambda)$.
\end{proof}

\begin{proof}[Proof of \thref{thm:sup}]
Fix $p = 5/17$. We will show that \eqref{eq:suff} holds for all $\lambda \geq 0$. By \thref{lem:coupon}, and taking $\lambda_0=0$ in \thref{prop:suff}, we then have $\FM(d,5/17)$ is recurrent for all $d\geq 3$. 

Towards \eqref{eq:suff}, we use the bound $10/31 \leq p^*$ from \eqref{eq:p*b} and that $\hat p =5/12$ to write
\begin{align}
    \E[e^{- p^* - \hat p (1+ U') \lambda}] &=  \sum_{u=0}^2 e^{-p^*} e^{- \hat p  (1+u) \lambda} \P(U'=u)	\\
    &=  e^{-\lambda} \sum_{u=0}^2 e^{-p^*} e^{(1-\hat p (1 + u)) \lambda } \P(U'=u) \\
    &\leq e^{-\lambda} \sum_{u=0}^2 e^{-\f{10}{31}} e^{(1 - \f 5{12}  (1+u) ) \lambda} \P(U'=u). \label{eq:f1}
\end{align}
Set
\begin{align}
f(\lambda) \coloneqq \sum_{u=0}^2 e^{-\f{10}{31}} e^{(1 - \f 5{12}  (1+u) ) \lambda} \P(U'=u).\label{eq:f}
\end{align}
Deducing \eqref{eq:suff} comes down to proving that 
\begin{equation}
f(\lambda) \leq e^{-\epsilon} \text{ for all $\lambda \geq 0$ and some $\epsilon >0$.} \label{eq:fe} 
\end{equation} 

Let $U'$ be the number of leaves from $\{v_1',v_2',v_3'\}$ that are visited by an active particle in the modified auxiliary process with $\pi=\Poi(\lambda)$ for $\lambda\geq 0$. Poisson thinning  allows us to explicitly compute the distribution of $U'$.  Each of the $\Poi(1)$ active particles initially at $\varnothing'$ visits one of the vertices $\{v_2',v_3'\}$ with probability $\frac{7}{12}\cdot\frac{2}{3}$, and each of the $\Poi(\lambda)$ active particles initially at $v_1'$ move to $\varnothing'$ with probability $1$, and then visits one of the vertices $\{v_2',v_3'\}$ with probability $7/12$. Therefore, the probability that none of the vertices $\{v_2',v_3'\}$ is activated is the probability that a $\Poi(\frac{7}{12}\frac{2}{3}+\frac{7}{12}\lambda)$ distributed random variable is equal to $0$, giving rise to $\P(U'=0)$. One can apply the same argument to compute $\P(U'=1)$. The only difference is that if one of the vertices $\{v_2',v_3'\}$ is visited, particles initially there will be activated and can possibly visit the remaining unvisited vertex. 

As a result of these calculations, we obtain
\begin{align}
    \P(U'=0) &=  e^{ - \f 7{12} \f 2 3 -  \f 7{12} \lambda }  = e^{- \f 7{18} - \f 7{12} \lambda}\\
    \P(U'=1) &= 2 ( 1- e^{ - \f 7{12} \f 13 - \f 12 \f 7 {12} \lambda } )e^{ - \f 7{12} \f 13 - \f 12 \f 7 {12}  \lambda } e^{ - \f 12 \f 7 {12}  \lambda }  = 2 (1- e^{-\f 7{36}- \f 7{24} \lambda })e^{-\f 7{36} - \f 7{12} \lambda} \\
    \P(U'=2) &= 1  - \P(U'=0) - \P(U'=1).
\end{align}
Using these terms in \eqref{eq:f} and letting
\begin{align*}
g(x)
\coloneqq\frac{2 x^{27}}{e^{397/558}}-\frac{x^{20}}{e^{397/558}}-\frac{2 x^{20}}{e^{577/1116}}-\frac{2 x^{17}}{e^{397/558}}+\frac{2 x^{10}}{e^{577/1116}}+\frac{x^6}{e^{10/31}}+\frac{1}{e^{397/558}},
\end{align*}
one can verify that $g(e^{-\lambda /24}) = f(\lambda)$. So, \eqref{eq:fe} is equivalent to the statement
\begin{align}
 \label{eq:g} g(x) \leq e^{-\epsilon} \text{ for some $\epsilon >0$ and all $x \in [0,1]$}
\end{align}


To prove \eqref{eq:g} we first compute the derivative: 

\[g'(x) = \frac{54 x^{26}}{e^{397/558}}-\frac{20 x^{19}}{e^{397/558}}-\frac{40 x^{19}}{e^{577/1116}}-\frac{34 x^{16}}{e^{397/558}}+\frac{20 x^9}{e^{577/1116}}+\frac{6 x^5}{e^{10/31}}.\]
The algorithm $\texttt{CountRoots}_{[0,1]}[g']=6$ from Wolfram Mathematica applies Sturm's theorem to rigorously find that $g'$ has exactly $6$ roots in $[0,1]$. 
 Since $x=0$ is a root of order 5, $g'$ has exactly one root in $(0,1]$. Call it $r_0$. Elementary calculus shows that $g(r_0)$ is the global maximum on $[0,1]$. As $g$ is an explicit polynomial, mathematical software can rigorously estimate both $r_0$ and $g(r_0)$ to arbitrary precision. Doing so gives $g(r_0) \leq .9963 < e^{-.003}$.

Thus, we may take $\epsilon = .003$ in \eqref{eq:g}. This gives \eqref{eq:fe} for the same $\epsilon$ which implies \eqref{eq:suff}. As discussed at the onset of the proof, we have satisfied the hypotheses of \thref{prop:suff}. So, $\FM(d,5/17)$ is recurrent for all $d \geq 3$.
\end{proof}

Assuming the reader has familiarity with the proof of \thref{thm:sup}, we now prove the generalization in \thref{cor:4}. 

\begin{proof}[Proof of \thref{cor:4}]
    Fix $p=27/100$ so that $p^*=(27d-27)/(73d-27)$ and $\hat{p}=27/73$. Suppose that $d \geq 4$. Since $p^*$ is increasing in $d$, by taking $d=4$ and $d=\infty$, we have for all $d\geq 4$,
    \[
    \frac{81}{265}\leq p^*\leq \frac{27}{73}.
    \]
    As in the proof of \thref{thm:sup}, we will show that \eqref{eq:suff} holds for all $\lambda \geq 0$. 

We define $\tilde U:= \tilde U (d,p,\lambda)$ to be the number of activated leaves in the following \emph{modified auxiliary process}. First, we reduce to a star graph with only four leaves $\tilde v_1, \tilde v_2, \tilde v_3, \tilde v_4$. Second, the $\Poi(1)$ active particles at $\varnothing'$ move to $\varnothing$ with probability $\tp = 81/265\leq p^* $,
away from $\varnothing$ to a uniformly sampled leaf with probability $\rho = 46/73\leq 1-p^*$,
and otherwise are immediately killed. Besides these changes, the process evolves in the same manner as the auxiliary process and runs until fixation. The random variable $\tilde U \in \{0,1,2,3\}$ is how many of $\{\tilde v_2, \tilde v_3, \tilde v_4\}$ are ever visited by an active particle. Following a similar argument as \thref{lem:coupon}, we have $\tU \preceq U(d,27/100, \lambda)$ for all $d\geq 4$. 

Using Poisson thinning, we can compute the distribution of $\tilde U$:
\begin{align}
    \P(\tU =0) &= e^{-\rho \f 34 - \rho \lambda} \\
    \P(\tU =1 ) &= 3 (1- e^{- \rho \f 14 - \rho \f 13 \lambda } ) e^{-\rho \f 2 4 - \rho \f 23 \lambda } e^{- \rho \f 2 3 \lambda } \\
    \P(\tU = 2) &= 3 (1- e^{- \rho \f 14 - \rho \f 13 \lambda } )^2 e^{-\rho \f 1 4 - \rho \f 13 \lambda } e^{- \rho \f 1 3  2\lambda }  \\
            & \hspace{ 1 cm} + 3 (1- e^{- \rho \f 14 - \rho \f 13 \lambda } ) e^{-\rho \f 2 4 - \rho \f 23 \lambda }  \\
                    & \hspace{2 cm} \times 2 (1- e^{- \rho \f 13 \lambda } ) e^{- \rho \f 1 3 \lambda }\\
    \P(\tU = 3) &= 1 - \P(\tU \leq 2)
\end{align}
In words: $\{\tU = 0\}$ has no frogs from $\tilde v_1$ move to $\tilde {\mathcal V} \coloneqq \{\tilde v_2, \tilde v_3, \tilde v_4$\}; $\{\tU = 1\}$ has one vertex from $\tilde {\mathcal V}$ become activated (3 choices) and the other two fail to activate; and $\{\tU = 2\}$ has either two vertices from $\tilde {\mathcal V}$ initially activate (3 choices) and the third fail to activate, or one vertex from $\tilde {\mathcal V}$ initially activate (3 choices) and that activates exactly one more (2 choices), which then fails to activate the remaining vertex. 

As in the proof of \thref{thm:sup}, we can write
$$\E[e^{-p^* - \hat p (1+\tU) \lambda }] \leq e^{-\lambda} \tilde f(\lambda)$$ with
\begin{align}
\tilde f(\lambda) = \sum_{u=0}^3 e^{-\tp} e^{(1- \rho (1 +u) ) \lambda}\P(\tU = u).\label{eq:tf}
\end{align}

It suffices to prove that $\tilde f(\lambda) \leq e^{-\epsilon}$ for some $\epsilon >0$ and all $\lambda \geq 0$. After the change of variables $\lambda \to -219 \log x$, this is equivalent to proving that $\tilde g(x) \coloneqq \tilde f(- 219 \log x) \leq e^{-\epsilon}$ for some $\epsilon >0$ and all $x \in [0,1]$. The choice $219$ was made from inspecting the expansion of $\tilde f$ (computed with Mathematica) to find the least common denominator of the fractional exponents involving $x$. We check that \texttt{CountRoots}$_{[0,1]}[\tilde g'] = 70$. Since $\tilde g'$ has a root of multiplicity $69$ at $x=0$, elementary calculus can be used to show that $\tilde g$ has a global maximum in $[0,1]$ at $\tilde x_0 \approx 0.992241$ of $\tilde g(x_0) \approx 0.998772 < e^{-.0011}$. These approximations are within $10^{-7}$ of the true values, so we may take $\epsilon =0.001$ and complete the argument as in the proof of \thref{thm:sup}.
\end{proof}

\begin{remark} \thlabel{rem:m}
    We describe how to generalize \thref{cor:4} to obtain a bound on $\sup_{d \geq m} p_d$. In principle, as $m$ increases, this should give bounds closer and closer to $1/6$ in agreement with \thref{thm:limsup}. We did not try to go beyond $m=4$, but this will likely become computationally infeasible at $m\approx 10$.  First we replace the $d$ leaves with $m$ leaves and  construct $\tilde U_m \in \{0,1,\hdots, m-1\}$ that is stochastically smaller than $U(d,p,\lambda)$. This is accomplished by using the drift $\tilde p_m = p^*(m,p) \leq p^*(d,p)$ towards $\varnothing$ in the auxiliary process and the drift $\rho_m = 1-\hat{p} \leq 1-p^*(d,p)$ away from $\varnothing$ to a uniformly sampled child vertex from $v_1,...,v_m$. We then need to compute the distribution of $\tilde U_m$ exactly. This is theoretically possible for any $m$, but becomes more and more complex as $m$ grows. One then constructs a function $\tilde f_m(\lambda) = \sum_{u=0}^{m-1} e^{- \tilde p_m} e^{(1- \rho(1+u))\lambda) }\P(\tilde U_m = u)$ as at \eqref{eq:tf}. One can plot $\tilde f_m$ using mathematical software to approximate small value of $p$ for which $\tilde f_m(\lambda) <1$ for all $\lambda \geq 0$, then use our approach that employs $\texttt{CountRoots}$ to show that the transformation $\tilde g_m(x) < 1$. 
\end{remark}

\section{Non-backtracking branching random walk} \label{sec:NBBRW}
In this section, we construct and deduce some properties of various spatially homogeneous multitype branching random walks that relate back to the frog model.

\subsection{Construction}
The process starts with a configuration of particles on $\mathbb Z_{+}$ at time 0. Each particle comes with a type $i\in \{1,2,...,k\}$. At discrete time steps, each particle independently gives birth to a random number of particles according to an offspring distribution that only depends on its type. The parent particle dies immediately after. Each newborn particle independently moves according to some displacement distribution that only depends on the particle's type. Particles that reach 0 are stopped there instantaneously and stay there forever without producing any offspring. 

For $i,j=1,...,k$, let $r_{ij}$ be the expected number of offspring of Type-$j$ produced by one Type-$i$ particle and $R=(r_{ij})_{i,j=1,...,k}$ be the mean matrix of the offspring distributions. 
For a particle of Type-$i$ at site $x\in\mathbb Z_{\geq 0}$, we let 
$$p_{x,y}^i = \P(\text{Type $i$ born at $x$ moves to $y$}).$$

We now give a formal definition of the {\it non-backtracking $p$-biased branching random walk on the nonnegative integers} which we denote by $\NBBRW(p)$. Suppose a given particle is at $x \geq 1$. Particles die immediately after producing offspring in the following manner: 
    \begin{description}
        \item[Type-1] Correspond to active particles that have yet to start moving away from the root. Each such particle produces either one Type-1 offspring with probability $\hat p$, or one Type-2 offspring plus a $\Poi(1)$-distributed number of Type-3 offspring with probability $1-\hat p$.
        \item[Type-2] Correspond to active particles that have began to move away from the root. Each such particle produces one Type-2 offspring and a $\Poi(1)$-distributed number of Type-3 offspring with probability $1$.
         \item[Type-3] Auxiliary Type-1 particle, have the same offspring distribution as Type-1 particles, but different displacement distribution. 
    \end{description}
After producing offspring, each newly generated particle, independently of everything else, displaces from $x$ according to the following transition probabilities:
\[
p_{x,x-1}^{1}=1,\quad  p_{x,x+1}^{2}=1, \quad p_{x,x+1}^{3}=1,\quad\text{for }x>0.
\]
In words, Type-1 particles always move one step left, and Type-2 and Type-3 particles move one step right. We stop any particles that reach $0$. 

Some quick remarks:
 \begin{itemize} 
   \item Type-2 particles correspond to non-backtracking active frogs that have turned away from the root and will continue moving away for all steps. Type-1 and Type-3 particles correspond to non-backtracking active frogs that may still jump towards the root. We need two different particle types so that the displacements are independent of the manner in which particles are born.
    \item There is no dependence on $d$ in the definition of $\NBBRW(p)$. Since $p^* \to \hat p$ when $d \to \infty$, one can view $\NBBRW(p)$ as the intuitive limiting version of $\NBFM(d,p)$.

    \item The mean displacement matrix $R = (r_{ij})$ is
\begin{equation*}
R=\begin{bmatrix}
\hat p & 1-\hat p & 1-\hat p\\
0 & 1 & 1\\
\hat p &1-\hat p & 1-\hat p
\end{bmatrix}.
\end{equation*}
  
\end{itemize} 


Let $V_{\NBBRW(p)}$ be the total number of particles that are killed at the origin. We say that $\NBBRW(p)$ is \emph{recurrent} if $\P(V_{\NBBRW(p)} = \infty)=1$ and otherwise \emph{transient}. We adapt ideas from \cite{comets1998one, machado2001recurrence} to find the criteria of recurrence and transience. 

Let us first introduce some additional notation from \cite{machado2001recurrence}. For $i=1,2,3$, we denote by $N_i(t)$ the number of Type-$i$ particles at time $t$ and $\{X^{i}_k(t): k=1,...,N_i(t)\}$ the set of positions of Type-$i$ particles at time $t$. Then the configuration at time $t$ is the multiset 
\[
\omega(t)=\{X_1^1(t),...,X_{N_1(t)}^1(t),...,X^3_1(t),...,X^3_{N_3(t)}\}.
\] 
The configuration of Type-$i$ particles at time $t$ is the multiset
\[
\omega_i(t)=\{X^i_1(t),...,X^i_{N_i(t)}(t)\}.
\]
We denote $p_i=p_{x,x-1}^{i}$ and $q_{i}=p_{x,x+1}^{i}$. Then we have $p_1=q_2=q_3=1$ and $p_2=p_3=q_1=0$.

Let $\mathcal M$ be the collection of initial configurations that consist of a finite number of particles distributed on $\mathbb Z_{\geq 0}$. Since for any $i,j=1,2,3$, Type-$i$ particles can be generated by a Type-$j$ particle in finite steps with positive probability, we have either $\E[V_{\NBBRW(p)}\mid \omega(0)=\omega]<\infty$ for all $\omega\in\mathcal M$ or $\E[V_{\NBBRW(p)} \mid \omega(0)=\omega]=\infty$ for all $\omega\in\mathcal M$. We will omit the initial configuration when we only care about the finiteness of $\E[V_{\NBBRW(p)}]$ rather than its precise value. 

\subsection{Transience and recurrence criteria}

The following lemma is a combination of \cite[Theorem 4 and Theorem 7]{machado2001recurrence}. It gives both necessary and sufficient conditions for $\NBBRW(p)$ to have a finite expected number of particles hitting the origin.  The proof is a non trivial application of \cite[Theorem 4 and Theorem 7]{machado2001recurrence}, because (a) $\NBBRW(p)$ does not satisfy all of the hypotheses used in \cite{machado2001recurrence}, and (b) the definitions of recurrence and transience in \cite{machado2001recurrence} are different from our definitions. However, the proof ideas can be adapted to our case. We also note that results similar to \thref{lem:VnbBRW} are present under other settings. A more general from of \eqref{eq:Thm7} appeared first in \cite{karpelevich1994boundedness} as a classification of one-dimensional branching random walk, then in \cite{menshikov1997branching} as a qualitative characterization of recurrence and transience for branching Markov chains, and also in \cite{comets1998one} under the setting of one-dimensional branching random walk in a random environment. The proof ideas are in the same vein.

\begin{lemma}\thlabel{lem:VnbBRW}
Consider $\NBBRW(p)$ started from a finite number of particles. If there exist $\mu>0,\alpha_1,\alpha_2,\alpha_3>0$ such that for $i=1,2,3$
\begin{equation}\label{eq:Thm7}
\sum_{j=1}^{3}r_{ij}\alpha_j\left(p_j\frac{1}{\mu}+q_j\mu\right)\leq \alpha_i,
\end{equation}
then $\E[V_{\NBBRW(p)}]<\infty$. On the other hand, if $\E[V_{\NBBRW(p)}]<\infty$, then there exist $\mu>0,\alpha_1,\alpha_2,\alpha_3>0$ such that \eqref{eq:Thm7} holds with equality for $i=1,2,3$.
\end{lemma}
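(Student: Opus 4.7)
The plan is to prove the two directions separately, adapting the spectral/Lyapunov machinery of \cite{machado2001recurrence, comets1998one}. It will be convenient to rewrite condition \eqref{eq:Thm7} as the matrix inequality $M(\mu)\vec\alpha \leq \vec\alpha$, where $M(\mu)_{ij} := r_{ij}(p_j/\mu + q_j\mu)$. Since Type-$1 \to$ Type-$2 \to$ Type-$3 \to$ Type-$1$ each occur with positive mean, $R$ is irreducible, so $M(\mu)$ is an irreducible nonnegative matrix for every $\mu > 0$. By Perron--Frobenius, $M(\mu)$ has a simple largest eigenvalue $\rho(\mu) > 0$ with strictly positive right and left eigenvectors, and $\rho(\mu)$ is continuous in $\mu$.

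For the sufficient direction, I would assume $\mu > 0$ and positive $\alpha_1,\alpha_2,\alpha_3$ satisfy \eqref{eq:Thm7} and define the Lyapunov functional
$$L(t) = \sum_{i=1}^{3} \alpha_i \sum_{k=1}^{N_i(t)} \mu^{X^i_k(t)},$$
with absorbed particles kept formally at the origin. A one-step calculation shows that the expected contribution to $L(t+1)$ from a Type-$i$ particle at $x \geq 1$ is $\mu^x \sum_j r_{ij}\alpha_j(p_j/\mu + q_j\mu) \leq \alpha_i \mu^x$ by \eqref{eq:Thm7}, while each absorbed particle contributes a fixed $\alpha_i$. Hence $(L(t))$ is a nonnegative supermartingale with $L(0) < \infty$, and since $\liminf_t L(t) \geq \bigl(\min_i \alpha_i\bigr) V_{\NBBRW(p)}$ almost surely, Fatou's lemma yields $\E[V_{\NBBRW(p)}] \leq L(0)/\min_i \alpha_i < \infty$.

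For the necessary direction, I would first analyze $\rho(\mu)$: because $M_{11}(\mu)$ contains the $1/\mu$ term and $M_{22}(\mu), M_{33}(\mu)$ contain $\mu$ terms, $\rho(\mu) \to \infty$ as $\mu \to 0^+$ or $\mu \to \infty$, so $\rho$ attains a finite minimum $\rho^* = \rho(\mu^*)$ on $(0,\infty)$. The central claim is that $\E[V_{\NBBRW(p)}] < \infty$ forces $\rho^* \leq 1$. Letting $u_i(n)$ denote the expected number of absorptions at the origin starting from one Type-$i$ particle at position $n$, one has the first-step recursion
$$u_i(n) = \sum_{j=1}^3 r_{ij}\bigl(p_j u_j(n-1) + q_j u_j(n+1)\bigr), \quad n \geq 1, \qquad u_i(0) = 1.$$
A generating-function analysis in the spirit of \cite{machado2001recurrence, comets1998one}, or alternatively pairing the recursion with the left Perron eigenvector of $M(\mu^*)$, shows that $\rho^* > 1$ forces every nonnegative solution to blow up, contradicting $u_i(1) < \infty$. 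Given $\rho^* \leq 1$, continuity of $\rho$ and the intermediate value theorem produce $\mu^{**} \in (0,\infty)$ with $\rho(\mu^{**}) = 1$; its right Perron eigenvector is the desired $\vec\alpha$ realizing equality in \eqref{eq:Thm7} for $i = 1,2,3$.

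The main obstacle is the step ``$\E[V_{\NBBRW(p)}] < \infty \Rightarrow \rho^* \leq 1$.'' The hypotheses in \cite{machado2001recurrence} impose regularity on the joint offspring-displacement dynamics that $\NBBRW(p)$ does not obviously satisfy: Type-$1$ particles always move left, Type-$2$ and Type-$3$ always move right, and Type-$2$ never produces Type-$1$ offspring. One must therefore either verify the hypotheses of \cite[Theorem 7]{machado2001recurrence} or the Karpelevich--Suhov criterion used in \cite{comets1998one} for $\NBBRW(p)$ directly, or give a bespoke argument using the left Perron eigenvector of $M(\mu^*)$ and the boundary condition $u_i(0) = 1$. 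A secondary chore is reconciling the recurrence/transience definitions used in \cite{machado2001recurrence} with the expected-value formulation stated here, which should be routine but needs to be made precise.
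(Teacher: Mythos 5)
Your sufficient direction is essentially the paper's argument: the Lyapunov functional $L(t)$ is exactly the supermartingale $Q(t)=\sum_{i}\sum_{j\le N_i(t)}\alpha_i\mu^{X^i_j(t)}$ used there, and the conclusion via supermartingale convergence (or Fatou) together with $\liminf_t L(t)\geq \alpha_1 V_{\NBBRW(p)}$ is sound; note you may as well use $\alpha_1$ rather than $\min_i\alpha_i$, since only Type-1 particles reach the origin. That half needs no further work.

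The necessary direction, however, contains a genuine gap, and it is precisely the step you flag yourself: you never prove that $\E[V_{\NBBRW(p)}]<\infty$ forces $\rho^*\leq 1$. Invoking ``a generating-function analysis in the spirit of'' the cited papers, or ``pairing the recursion with the left Perron eigenvector,'' is a plan rather than an argument: the first-step recursion for $u_i(n)$ is a second-order linear system whose general nonnegative solution space is not controlled by the spectral radius of $M(\mu)$ alone, and one must identify \emph{which} solution the actual hitting expectations realize before any blow-up dichotomy can be extracted. The missing idea — and the way the paper closes exactly this gap — is a probabilistic multiplicativity identity: starting from a single Type-$i$ particle at $x+1$, every lineage must pass through site $1$, it does so for the first time as a Type-1 particle, the number $V_1$ of such first passages is distributed as $V_{\NBBRW(p)}$ started from Type-$i$ at $x$, and each stopped particle at $1$ regenerates an independent copy of the process started from a Type-1 particle at $1$. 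Hence $f_i(x+1)=f_i(x)f_1(1)$, so $f_i(x)=f_i(1)f_1(1)^{x-1}$, and substituting this geometric form into the first-step recursion immediately yields \eqref{eq:Thm7} with equality for $\mu=f_1(1)$ and $\alpha_i=f_i(1)$ — no Perron--Frobenius analysis, no minimization of $\rho(\mu)$, and no intermediate value theorem are needed. Your spectral route could in principle be completed (and would additionally require checking that the $f_i(1)$ you produce are strictly positive, which the regeneration argument gives for free), but as written the central implication is asserted, not proved.
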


\begin{proof}
If \eqref{eq:Thm7} holds, define
\[
Q(t)=\sum_{i=1}^{3}\sum_{j=1}^{N_i(t)}\alpha_i\mu^{X_j^i(t)}.
\]
The process $\{Q(t)\}_{t=0}^{\infty}$ is a non-negative supermartingale. Indeed, let $\mathcal F_t$ be the $\sigma$-field generated by $\NBBRW(p)$ up to time $t$. By the branching property and \eqref{eq:Thm7}, we have
\begin{align*}
&\E[Q(t+1) \mid \mathcal F_t]\\
&=\sum_{i=1}^{3}\sum_{j=1}^{N_i(t)}\left(\sum_{k=1}^3 r_{ik}\alpha_k\left(p_k \mu^{X_j^i(t)-1}+q_k\mu^{X_j^i(t)+1}\right)1_{\{X_j^i(t)>0\}}+\alpha_i \mu^{X_j^i(t)}1_{\{X_j^i(t)=0\}}\right)\\
&= \sum_{i=1}^{3}\sum_{j=1}^{N_i(t)}\left(\mu^{X_j^i(t)}\sum_{k=1}^3 r_{ik}\alpha_k\left(p_k \frac{1}{\mu}+q_k\mu\right)1_{\{X_j^i(t)>0\}}+\alpha_i \mu^{X_j^i(t)}1_{\{X_j^i(t)=0\}}\right)\\
&\leq Q(t).
\end{align*}
By the supermartingale convergence theorem, there exists a random variable $Q_{\infty}$ such that $Q(t)\rightarrow Q_{\infty}$ almost surely as $t\rightarrow\infty$ and $\E[Q_{\infty}]\leq \E[Q(0)]$. We further note that $Q_{\infty}\geq \alpha_1 V_{\NBBRW(p)}$ since only Type-1 particles can hit the origin. As a result,
\[
\E[V_{\NBBRW(p)}]\leq \frac{\E[Q_{\infty}]}{\alpha_1}\leq \frac{\E[Q(0)]}{\alpha_1}<\infty.
\]

On the other hand, if $\E[V_{\NBBRW(p)}]<\infty$, we define for $x\in \mathbb Z_{\geq 0}$
\begin{equation}\label{eq:fi(x)}
f_i(x)= \E[V_{\NBBRW(p)} \mid \omega(0)=\omega_i(0)=\{x\}]
\end{equation}
to be the expected total number of visits to $0$ conditional on the initial configuration starting with a single particle of type $i$ at $x$.
Note that $f_i(1)>0$ for $i=1,2,3$. By the first step analysis, we have for $i=1,2,3$ and $x\in \mathbb Z_{\geq 1}$,
\begin{equation}\label{eq:Thm4}
f_i(x)=\sum_{j=1}^{3} r_{ij}\left(p_j f_{j}(x-1)+q_j f_j(x+1) \right).
\end{equation}

For $x\geq 1$, consider $\NBBRW(p)$ started from a single particle at $x+1$ with Type-$i$. We can construct a modified process in which particles that reach the site $1$ are stopped. Let $V_1$ be the number of particles that reach $1$ and are stopped. For each lineage, only Type-$1$ particles can reach $1$ for the first time. Note that $V_1$ has the same distribution as $V_{\NBBRW(p)}$ under the process started from one Type-$i$ particle at $x$. Therefore, $V_1<\infty$ almost surely. Furthermore, because each Type-$1$ particle that reaches $1$ behaves afterwards like another $\NBBRW(p)$ started from a single particle at $1$ with Type-$1$, the number of particles stopped at $0$ conditioned on $V_1$ is the same as the distribution of the sum of $V_1$ independent random variables, each with the same distribution as $V_{\NBBRW(p)}$ under the process started from one Type-$1$ particle at $1$. We therefore have for $i=1,2,3$
\[
f_i(x+1)=f_i(x)f_1(1).
\]
By induction, we get for $i=1,2,3$ and $x\in\mathbb Z_+$
\begin{equation}\label{eq:induction}
f_i(x)=f_i(1)f_1(1)^{x-1}.
\end{equation}
Plugging \eqref{eq:induction} into \eqref{eq:Thm4}, by choosing $\mu=f_1(1)$ and $\alpha_i=f_i(1)$ for $i=1,2,3$, equation \eqref{eq:Thm7} holds with equality and the lemma follows.
\end{proof}

\begin{lemma} \thlabel{lem:nbBRW}
Suppose the initial configuration is finite and contains at least one Type-2 particle not at 0. Then  $\NBBRW(p)$ is transient if and only if $p \leq 1/6.$
\end{lemma}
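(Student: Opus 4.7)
The plan is to apply \thref{lem:VnbBRW} with the explicit mean matrix $R$ and transition probabilities of $\NBBRW(p)$. Since $p_1=q_2=q_3=1$ and $q_1=p_2=p_3=0$, the quantity $p_j/\mu+q_j\mu$ equals $1/\mu$ for $j=1$ and $\mu$ for $j=2,3$, so \eqref{eq:Thm7} reduces to
\begin{align*}
\tfrac{\hat p}{\mu}\alpha_1+(1-\hat p)\mu(\alpha_2+\alpha_3)&\leq\alpha_1,\\
\mu(\alpha_2+\alpha_3)&\leq\alpha_2,\\
\tfrac{\hat p}{\mu}\alpha_1+(1-\hat p)\mu(\alpha_2+\alpha_3)&\leq\alpha_3.
\end{align*}
Because rows $1$ and $3$ of $R$ are identical, a short comparison shows that replacing $\alpha_1,\alpha_3$ by their minimum preserves all three inequalities, so without loss of generality $\alpha_1=\alpha_3=:a$. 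The middle inequality then forces $\alpha_2\geq a\mu/(1-\mu)$, and plugging in this tight lower bound (which minimizes the first LHS) collapses the remaining constraint to
\[
\frac{\hat p}{\mu}+\frac{(1-\hat p)\mu}{1-\mu}\leq 1,
\]
equivalently $\hat p\leq h(\mu):=\mu(1-2\mu)/(1-\mu-\mu^2)$ for some $\mu\in(0,1/2)$.

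Elementary calculus gives $h'(\mu)=(1-\mu)(1-3\mu)/(1-\mu-\mu^2)^2$, so $h$ attains its unique interior maximum on $(0,1/2)$ at $\mu=1/3$ with $h(1/3)=1/5$. Hence \eqref{eq:Thm7} admits a positive solution iff $\hat p\leq 1/5$, and since $\hat p=p/(1-p)$ this is equivalent to $p\leq 1/6$. For $p\leq 1/6$, the first half of \thref{lem:VnbBRW} yields $\E[V_{\NBBRW(p)}]<\infty$, hence $V_{\NBBRW(p)}<\infty$ almost surely and $\NBBRW(p)$ is transient. For $p>1/6$ no such solution exists, so the contrapositive of the second half of \thref{lem:VnbBRW} gives $\E[V_{\NBBRW(p)}]=\infty$.

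The main obstacle is the final step: upgrading $\E[V_{\NBBRW(p)}]=\infty$ to $V_{\NBBRW(p)}=\infty$ almost surely, which the definition of recurrence requires. I would handle this via a branching 0-1 law---each Type-1 descendant of the initial configuration initiates an independent copy of $\NBBRW(p)$ (shifted in space and started from a single Type-1), making $\{V_{\NBBRW(p)}<\infty\}$ a tail event of the branching structure---or, more directly, by invoking the one-dimensional branching random walk recurrence classification of \cite{comets1998one}, whose criterion is exactly the solvability of an inequality of the form \eqref{eq:Thm7} and which identifies recurrence with the absence of such a Lyapunov supermartingale rather than merely with infinite expectation.
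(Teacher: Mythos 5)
Your proposal is correct and reaches the same threshold, but the computation is a genuinely different route. The paper substitutes $\mu=e^{-\theta}$, merges Types 1 and 3 into a $2\times 2$ weight matrix $\Phi(\theta)$, and asks when $1$ is an eigenvalue with a positive eigenvector, which reduces to the discriminant condition $1-6\hat p+5\hat p^2\geq 0$, i.e.\ $\hat p\leq 1/5$. You instead eliminate variables directly from the three inequalities in \eqref{eq:Thm7}: the identical rows $1$ and $3$ justify $\alpha_1=\alpha_3$, the Type-2 row pins down the minimal admissible $\alpha_2=a\mu/(1-\mu)$, and the whole system collapses to $\hat p\leq h(\mu)=\mu(1-2\mu)/(1-\mu-\mu^2)$, maximized at $\mu=1/3$ with value $1/5$. (Reassuringly, $\mu=1/3$ matches the paper's $e^{-\theta}$ at criticality.) Your method avoids eigenvalue computations and makes the optimality of the threshold transparent; the paper's method generalizes more mechanically to other offspring matrices. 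Two small points to tighten. First, you should say a word about why $\mu\geq 1/2$ can be discarded: the Type-2 inequality already forces $\mu<1$, and for $\mu\in[1/2,1)$ one checks that the reduced constraint cannot hold for any $\hat p\in(0,1)$, so restricting to $(0,1/2)$ loses nothing. Second, your first suggested fix for the recurrence direction (a ``branching 0-1 law'' making $\{V_{\NBBRW(p)}<\infty\}$ a tail event) is too vague to stand on its own: a 0-1 law only tells you $\P(V_{\NBBRW(p)}=\infty)\in\{0,1\}$, and ruling out the value $0$ is exactly the nontrivial step. The paper's (and your second) route is the right one: the homogeneity results behind \cite[Theorem 9]{machado2001recurrence} and \cite[Theorem 4.3]{comets1998one} upgrade $\E[V_{\NBBRW(p)}]=\infty$ to $\P(V_{\NBBRW(p)}\geq 1)=1$ from any finite nontrivial configuration, and then the hypothesis that some Type-2 particle survives forever (which your write-up never uses, but which is essential here) combines with the Markov property to give infinitely many visits almost surely.
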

\begin{proof}
\thref{lem:VnbBRW} gives a criteria for proving that $\NBBRW(p)$ is transient. Namely, it is sufficient to prove that, when $p \leq 1/6$, there exist $\theta\in\mathbb R$ and $\alpha_1,\alpha_2,\alpha_3>0$ such that equation \eqref{eq:Thm7} holds with equality for $i=1,2,3$.
Given $\theta \in \mathbb R$, define the weight matrix
$$
\Phi(\theta) = \begin{bmatrix}
    {\hat p} e^{\theta} + (1-{\hat p}) e^{-\theta} & (1-{\hat p}) e^{-\theta} \\
    e^{-\theta} & e^{-\theta}
\end{bmatrix}.
$$ 
Equation \eqref{eq:Thm7} would hold with equality if we can show that 1 is the eigenvalue of $\Phi(\theta)$ and there exists an eigenvector associated to $1$ with all of its elements positive. 

The eigenvalues of $\Phi(\theta)$ are
\begin{align}
    \beta_{\pm} &= \f 1 2 e^{-\theta} \left(2 - {\hat p} + e^{2 \theta} {\hat p} \pm 
   \sqrt{-4 e^{2 \theta} {\hat p} + (-2 + {\hat p} - e^{2 \theta} {\hat p})^2}\right),
\end{align}
with associated eigenvectors
\begin{align}
    v_\pm = \frac{1}{2} \left(-\hat p+e^{2 \theta} 
    \hat p \pm\sqrt{-4 e^{2 \theta} {\hat p} + (-2 + {\hat p} - e^{2 \theta} {\hat p})^2},1\right)
\end{align}
Solving $\beta_{\pm} =1$ for $\theta$ gives the same solutions for both eigenvalues. In particular,
$$\theta_\pm = \log\left[(1 + {\hat p} \pm \sqrt{1 - 6 {\hat p} + 5 {\hat p}^2})/(2 {\hat p})\right].$$
For $\hat p \in (0,1/5]$, the quadratic $1- 6{\hat p} + 5{\hat p}^2$ is non-negative and there is a solution. This implies that for $\hat p\in (0, 1/5]$, equivalently $p\in (0,1/6]$, there exists $\theta_0$ with $1$ the eigenvalue of $\Phi(\theta_0)$. Furthermore, it can be easily shown that $v_{+}$ is an eigenvector of $\beta_+=1$ and every element of $v_{+}$ is positive. Equation \eqref{eq:Thm7} follows with equality and thus $\NBBRW(p)$ is transient for any $p \leq 1/6$. 


On the other hand, if $1/6<p<1/2$, then $\hat{p}\in (1/5,1)$. Since the quadratic $1- 6{\hat p} + 5{\hat p}^2$ is negative for $\hat{p}\in (1/5,1/2)$, there are no solutions to \eqref{eq:Thm7} in which equality holds. We know from \thref{lem:VnbBRW} that $\E[V_{\NBBRW(p)}]=\infty$ for any finite initial configuration with not all particles at 0. It follows from the proofs of \cite[Theorem 9]{machado2001recurrence} and \cite[Theorem 4.3]{comets1998one} that since $\NBBRW(p)$ is homogeneous in the sense that offspring distributions and transition probabilities do not depend on the location of particles, $\P(V_{\NBBRW(p)}\geq 1)=1$ if the process starts from a finite number of particles not all located at 0. Moreover, if the initial configuration contains at least one Type-2 particle not at 0, then $\NBBRW(p)$ has Type-2 particles survive forever. Together with the Markovian property of $\NBBRW(p)$, we conclude that the origin is visited infinitely often almost surely. Therefore, $\P(V_{\NBBRW(p)}=\infty)=1$ and $\NBBRW(p)$ is recurrent.
%
\end{proof}

Lastly, it is necessary for our arguments to deduce transience of a reflected version of $\NBBRW(p)$. Let \emph{reflected non-backtracking branching random walk} $\RNBBRW(p)$ be the variant in which any particle that moves to $0$, instead of being stopped, converts to a Type-2 particle that continues producing offspring. In $\RNBBRW(p)$, particles reflect at the origin. 

\begin{lemma}\thlabel{lem:reflection}
    If $p \leq 1/6$, then $\RNBBRW(p)$ is transient. 
\end{lemma}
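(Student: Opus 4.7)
The plan is to adapt the Lyapunov function $Q(t) = \sum_i\sum_j \alpha_i \mu^{X^i_j(t)}$ from the proof of \thref{lem:nbBRW} by adding a running count of reflections at the origin. For $p \leq 1/6$, that proof produces $\mu \in (0,1)$ and positive $\alpha_1 = \alpha_3, \alpha_2$ satisfying \eqref{eq:Thm7} with equality. I would first verify an auxiliary property: $\mu < 1/2$, and hence $\alpha_1 > \alpha_2$, since $\alpha_2 = \alpha_1\mu/(1-\mu)$ from the $i=2$ equation at equality. This follows because the quadratic in $\mu$ obtained by substituting that ratio into the $i=1$ equation, namely $(2-\hat p)\mu^2 - (1+\hat p)\mu + \hat p = 0$, has value $\hat p > 0$ at $\mu = 0$ and $\hat p/4 > 0$ at $\mu = 1/2$, so its two real roots (which exist for $\hat p \leq 1/5$) both lie in $(0, 1/2)$.

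Next, I would let $W(t)$ count the total number of particles reflected at the origin through time $t$ and set $S(t) = Q(t) + c\,W(t)$ with $c = \alpha_1 - \alpha_2 > 0$. The claim is that $S(t)$ is a non-negative supermartingale under $\RNBBRW(p)$. I would verify this particle-by-particle. A particle of any type at $x \geq 2$ cannot trigger a reflection in one step, so $\Delta \E W = 0$ and the $i$-th equation at equality gives $\Delta \E Q = 0$. A Type-$2$ particle at $x = 0$ produces one Type-$2$ and $\Poi(1)$ Type-$3$ offspring that all move to $1$, and the $i = 2$ equation $(\alpha_2 + \alpha_3)\mu = \alpha_2$ forces $\Delta \E Q = 0$ with no reflection. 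Finally, a Type-$1$ or Type-$3$ at $x = 1$ produces a Type-$1$ offspring with probability $\hat p$, which becomes a reflected Type-$2$ at the origin contributing $\alpha_2$ to $Q$ instead of the $\alpha_1$ that a killed particle would contribute in $\NBBRW(p)$. This event decreases $\Delta \E Q$ by exactly $c\hat p = \hat p(\alpha_1 - \alpha_2)$ relative to the $\NBBRW$ balance (which is zero at equality), and simultaneously adds $c\hat p$ to $\Delta\E(cW)$. The two contributions cancel, so $\Delta \E S = 0$.

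Having established the non-negative supermartingale property, the convergence theorem gives $S(t) \to S_\infty$ almost surely with $\E[S_\infty] \leq \E[S(0)] = Q(0) < \infty$ whenever the initial configuration is finite. Since $W(t)$ is monotone non-decreasing and $cW(t) \leq S(t)$, Fatou's lemma yields
\[
\E[W_\infty] \leq \liminf_{t \to \infty} \E[W(t)] \leq \frac{\E[S(0)]}{c} < \infty,
\]
so $W_\infty < \infty$ almost surely and $\RNBBRW(p)$ is transient. The main obstacle is the balancing calculation at $x = 1$: one must recognize that each reflection swaps the endpoint contribution in $Q$ from $\alpha_1$ to $\alpha_2$, making $c = \alpha_1 - \alpha_2$ the unique weight that keeps $S$ a supermartingale. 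A secondary, smaller point is the auxiliary inequality $\mu < 1/2$, needed solely to guarantee $c > 0$.
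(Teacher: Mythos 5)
Your proof is correct, but it takes a genuinely different route from the paper's. The paper handles reflection by decomposing $V_{\RNBBRW(p)}$ according to how many times the visiting particle has already hit the origin: it shows by a coupling/induction that the expected number of $n$th visits equals $f_i(x)\bigl(f_2(1)+f_3(1)\bigr)^{n-1}$, where the $f_j$ are the expected visit counts of the killed process $\NBBRW(p)$, and then sums the geometric series after checking $f_2(1)+f_3(1)=\mu+\mu^2/(1-\mu)<1$. You instead build a single Lyapunov function for the reflected process itself, $S(t)=Q(t)+c\,W(t)$ with $c=\alpha_1-\alpha_2$, and verify the supermartingale property locally; your key computation at $x=1$ (a reflection swaps an endpoint weight $\alpha_1$ for $\alpha_2$, which is exactly compensated by crediting $c$ to the reflection counter) is sound, as is the derivation $\alpha_1=\alpha_3$, $\alpha_2=\alpha_1\mu/(1-\mu)$ and the localization of the roots of $(2-\hat p)\mu^2-(1+\hat p)\mu+\hat p=0$ in $(0,1/2)$. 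It is worth noting that both arguments ultimately rest on the identical inequality $\mu/(1-\mu)<1$, i.e.\ $\mu<1/2$: in the paper it makes the geometric ratio less than one, in yours it makes $c>0$. What your approach buys is a shorter, more self-contained argument that avoids the iterated-copies coupling and the induction on visit order; what the paper's buys is the explicit formula \eqref{eq:formulaV(n)} for the expected number of $n$th visits, which is more informative than the bare bound $\E[W_\infty]\leq \E[S(0)]/c$. One cosmetic omission: your case analysis skips a Type-2 particle at $x=1$ (it is neither ``at $x\geq 2$'' nor one of your two boundary cases), but that case is immediate since all of its offspring move right and the $i=2$ equality gives $\Delta\E Q=0$ with no reflection.
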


\begin{proof}
Let $V_{\RNBBRW(p)}$ be the number of times that particles hit the origin. Let $A_{i,x}$ denote the event $\omega(0) = \omega_i(0) = \{x\}$.  It is sufficient to prove that if $p \leq 1/6$, then for all $x\in\mathbb Z_{\geq 0}$ and $i=1,2,3$
\begin{equation}\label{eq:VrnbBRW}
\E[V_{\RNBBRW(p)} \mid A_{i,x}]<\infty.
\end{equation}

If $x=0$, then equation \eqref{eq:VrnbBRW} is obvious. Suppose $\RNBBRW(p)$ starts with one Type-$i$ particle at site $x\in \mathbb Z_{>0}$. If a particle hits the origin at time $t$, we can trace its past trajectory and count the total number of times this particle has hit the origin until time $t$. Call a visit to the origin an $n$th visit if the visiting particle has visited the origin exactly $n-1$ times in its past trajectory. Let $V_{\RNBBRW(p)}(n)$ be the total number of $n$th visits. We can decompose the expectation in  \eqref{eq:VrnbBRW} as
\begin{equation}\label{eq:VrnbBRWsum}
\E[V_{\RNBBRW(p)} \mid  A_{i,x}]=\sum_{n=1}^{\infty} \E[V_{\RNBBRW(p)}(n) \mid A_{i,x}].
\end{equation}

For each $n\geq 1$, we consider a modified process $\RNBBRW_n(p)$ in which all particles are killed immediately after an $n$th visit. Note that for each $n\geq 1$, the number of particles killed at the origin in $\RNBBRW_n(p)$ is indeed $V_{\RNBBRW(p)}(n)$. Furthermore, only Type-1 particles can visit the origin. 
In the original process $RNBBRW_n(p)$, the Type-1 particle is not killed after the visit. Instead, it will convert to a Type-2 particle and generate one Type-2 particle and $\Poi(1)$ Type-3 particle at site 1 in the next step. When $n=1$, the modified process $\RNBBRW_1(p)$ is identical to $\NBBRW(p)$. 

Recall the functions $\{f_i(x)\}_{\mathbb Z_{>0}}$ defined in \eqref{eq:fi(x)} under the setting of $\NBBRW(p)$ for $i=1,2,3$. We have
\[
\E[V_{\RNBBRW(p)}(1) \mid  A_{i,x}]=f_i(x),
\]
which is finite when $p \leq 1/6$. When $n=2$, we can couple $\RNBBRW_2(p)$ with $\RNBBRW_1(p)$ such that all particles which hit the origin twice are descendants of particles that are killed in $\RNBBRW_1(p)$. Each particle that should have been killed in $\RNBBRW_1(p)$ will give birth to on average one Type-2 particle and one Type-3 particle at 1 in the next step. All of these newly generated particles will initiate independent copies of $\RNBBRW_1(p)$ (i.e. $\NBBRW(p)$) from site 1. Thus we obtain
\[
\E[V_{\RNBBRW(p)}(2) \mid  A_{i,x}]=f_i(x)(f_2(1)+f_3(1)).
\]
By induction, we have for all $n\geq 1$,
\begin{equation}\label{eq:formulaV(n)}
\E[V_{\RNBBRW(p)}(n) \mid  A_{i,x}]=f_i(x)(f_2(1)+f_3(1))^{n-1}.
\end{equation}
Therefore, equation \eqref{eq:VrnbBRW} would follow from \eqref{eq:VrnbBRWsum} and \eqref{eq:formulaV(n)} once we prove that 
\begin{equation}\label{eq:f23(1)}
f_2(1)+f_3(1)<1
\end{equation}
when $p \leq 1/6$.

It remains to compute $f_2(1)$ and $f_3(1)$. Recall the proofs and notation in \thref{lem:VnbBRW}. When $p\leq 1/6$, $f_i(x)$ is finite for all $x\in\mathbb Z_{>0}$ and $i=1,2,3$. Since $\{f_i(x)\}_{\mathbb Z_{>0}}$  satisfies equations \eqref{eq:Thm4} and \eqref{eq:induction}, we have $f_1(1)=\mu<1$ and for all $n\geq 1$,
\begin{align}
f_1(n)&=f_3(n)=\mu^n\\
f_2(n)&=\sum_{k=n+1}^{\infty}f_3(k)=\sum_{k=n+1}^{\infty}\mu^k=\frac{\mu^{n+1}}{1-\mu}.
\end{align}
According to the computation in \thref{lem:nbBRW}, we observe that
\[
f_1(1)=\mu=e^{-\theta}=\frac{2 {\hat p}}{1 + {\hat p} \pm \sqrt{1 - 6 {\hat p} + 5 {\hat p}^2}}=\frac{1+\hat p \mp \sqrt{1-6\hat p+5\hat p^2}}{2(2-\hat p)}.
\]
When $p \in (0, 1/6]$, i.e. $\hat p\in (0,1/5]$, one can easily check that 
\[
0<\mu\leq \frac{1+\hat p + \sqrt{1-6\hat p+5\hat p^2}}{2(2-\hat p)}<1/2,
\]
which implies that
\[
f_2(1)+f_3(1)=\frac{\mu^2}{1-\mu}+\mu<1.
\]
Consequently, equation \eqref{eq:f23(1)} holds and the lemma follows.
\end{proof}


\subsection{Main ingredients in the proof \thref{thm:lim}} \label{sec:relate}

In \thref{lem:transient}, we show that transience of $\RNBBRW(p)$ when $p\leq 1/6$ implies transience of $\NBFM(d,p)$. In \thref{lem:lb}, we use recurrence of $\NBBRW(p)$ when $p > 1/6$ to show that $V_{\SFM(d,p)}$ dominates a Poisson random variable whose parameter diverges with $d$.

In regards to \thref{lem:transient}, intuitively $\NBBRW(p)$ ought to have more visits to $0$ than $\NBFM(d,p)$ to the root. The reasons are (a) particles in $\NBBRW(p)$ have a slightly stronger drift towards $0$ (because $p^*<\hat{p}$ for $p<1/2$), and (b) particles moving away from the root in $\NBBRW(p)$ always ``activate" an additional $\Poi(1)$-number of particles. As discussed in the introduction, it is generally not known how to couple models with different drifts. Fortunately, (b) is enough to overcome these complications. 

Overcoming the complications has the cost of a more involved coupling than might on the surface seem necessary.
For example, we need to work with reflected branching random walk $\RNBBRW(p)$. Otherwise, in the killed-at-0 version ($\NBBRW(p)$) the stronger drift might cause some particles to reach $0$ and be killed, which hurts total progeny. We also introduce a family of (reflected) branching processes whose particle displacements depend on $d$. These are nice intermediaries that couple more cleanly with $\RNBBRW(p)$ and $\NBFM(d,p)$.

\begin{lemma} \thlabel{lem:transient}
If $p\leq 1/6$, then $\NBFM(d,p)$ is transient. 
\end{lemma}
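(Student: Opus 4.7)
The plan is to stochastically dominate $V_{\NBFM(d,p)}$ by the number of visits to $0$ of $\RNBBRW(p)$, which is almost surely finite for $p\le 1/6$ by \thref{lem:reflection}. Two mismatches must be bridged: newly activated frogs in $\NBFM(d,p)$ turn towards the root with probability $p^*$ rather than $\hat p$, and each forward step in $\NBFM(d,p)$ activates at most one dormant frog deterministically, whereas each forward step in $\RNBBRW(p)$ produces an additional $\Poi(1)$ number of Type-3 offspring. I will route the comparison through an intermediate reflected multi-type branching random walk $\RNBBRW_d(p)$ on $\mathbb{Z}_{\ge 0}$ whose Type-3 offspring law uses the frog-model drift $p^*(d,p)$ in place of $\hat p$, but whose offspring counts and reflection-at-the-origin rule match those of $\RNBBRW(p)$.

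First I would project $\NBFM(d,p)$ onto graph distance from the root. Because the walks are non-backtracking on a tree, lineages that descend into disjoint subtrees never interact, so the projection is a three-type process on $\mathbb{Z}_{\ge 0}$ with $\pm 1$ displacements and the types from Section~\ref{sec:NBBRW}: towards-root movers, committed outward-movers, and just-activated frogs. An upper bound on $V_{\NBFM(d,p)}$ is obtained by pretending every forward step activates a new dormant frog, even when the target vertex has already been visited; this yields a genuine Markovian three-type branching process. A particle-by-particle coupling of this generous projection with $\RNBBRW_d(p)$ then gives $V_{\NBFM(d,p)}\preceq V_{\RNBBRW_d(p)}$: the single deterministic activation per forward step is embedded in the $\Poi(1)$ offspring of $\RNBBRW_d(p)$, and on the event $\{\Poi(1)=0\}$ any shortfall is recovered from reflected lineages at the origin, which is why reflection rather than killing is essential.

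Next, since $p^*\le \hat p$ for $p\le 1/2$, a direct coupling yields $V_{\RNBBRW_d(p)}\preceq V_{\RNBBRW(p)}$: each Type-3 particle in $\RNBBRW(p)$ is more likely to turn back than its counterpart in $\RNBBRW_d(p)$, so can only produce more Type-1 descendants that hit $0$. Chaining the two dominations with \thref{lem:reflection} gives $\E[V_{\NBFM(d,p)}]<\infty$ from any finite initial configuration, which is transience.

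The main obstacle is the first coupling. Because $\Poi(1)$ places positive mass at $0$, no pointwise coupling guarantees that $\RNBBRW_d(p)$ has at least as many offspring per forward step as the generous frog-model projection. Reflection at the origin is what absorbs these occasional deficits: since $\E[\Poi(1)]=1$ matches the deterministic activation rate in the frog model and reflected particles become Type-2 sources of further progeny, the shortfall events can be compensated. Turning this mean-level compensation into a genuine stochastic domination of \emph{total} visit counts rather than just expected visit counts is the technical heart of the argument, and is precisely the reason for introducing the $d$-dependent intermediary $\RNBBRW_d(p)$ instead of coupling $\NBFM(d,p)$ and $\RNBBRW(p)$ directly.
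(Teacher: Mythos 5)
Your overall architecture coincides with the paper's: both route the comparison through the same $d$-dependent reflected intermediary (the paper's $\RNBBRW(d,p)$, your $\RNBBRW_d(p)$), both use $p^*\leq\hat p$ to couple that intermediary below $\RNBBRW(p)$, and both finish by invoking \thref{lem:reflection}. Your second coupling (tracking that the $\RNBBRW(p)$ counterpart turns away no farther from $0$ and then producing identical $\Poi(1)$ counts on the outward path) is essentially the paper's argument and is fine.

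The gap is in the first coupling, and the mechanism you propose to close it does not work. You correctly observe that the deterministic single activation at a first-visited site in $\NBFM(d,p)$ is not pointwise dominated by the $\Poi(1)$ Type-3 offspring produced at a forward step, since $\P(\Poi(1)=0)>0$. But your repair --- that ``on the event $\{\Poi(1)=0\}$ any shortfall is recovered from reflected lineages at the origin'' --- cannot succeed: a reflection event at the origin is spatially and causally unrelated to a missed activation at a forward step elsewhere, and a reflected Type-2 lineage cannot stand in for a frog activated deep in the tree while preserving the requirement that each matched particle sit at least as close to $0$ as its frog. Reflection plays an entirely different role in the paper: it is needed in the \emph{second} coupling, so that the $\RNBBRW(p)$ counterpart of a particle (which, having the stronger first-step drift $\hat p>p^*$, may reach $0$ sooner) is not killed there and hence does not lose the progeny required to dominate $\RNBBRW(d,p)$. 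The paper's first coupling matches each active frog directly with a particle at the same displacement, justified by viewing $\RNBBRW(d,p)$ as the variant of $\NBFM(d,p)$ in which every forward jump activates new particles; it does not route any deficit through the origin. Since you explicitly defer what you yourself call ``the technical heart of the argument'' (upgrading the mean-level comparison to a stochastic domination of visit counts), and the one concrete idea you offer for that step fails, the proposal does not establish the lemma.
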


\begin{proof}
For this proof we will view $\NBBRW(p)$ as the process in which newly generated particles iteratively jump towards 0 with probability $\hat p$, but once they turn away, continue to jump away from $0$ for all subsequent steps. Each jump away from $0$ produces an independent $\Poi(1)$-distributed number of particles at the site jumped to. From this point of view, particles initially jump towards the root, eventually turn away, and then produce particles at each site they jump to thereafter.


Let $\RNBBRW(p)$, as introduced in \thref{lem:reflection}, be the \emph{reflected} modification. Any particle that visits 0, will on the next step jump to $1$ and produce an additional $\Poi(1)$-distributed number of particles there. Lastly, we define $\RNBBRW(d,p)$ to be the modification of $\RNBBRW(p)$ in which newly generated particles jump left on their first step with probability $p^*$ rather than $\hat p$. Subsequent steps are to the left with probability $\hat p$, as usual. 

These transition probabilities are chosen so that $\RNBBRW(d,p)$ stochastically dominates $\NBFM(d,p)$. Namely, any active frog in $\NBFM(d,p)$ can be coupled with a unique particle in $\RNBBRW(d,p)$ whose position is equal to the active frog's displacement from the root of $\mathbb T_d$. The coupling is intuitive and works because (i) we may view $\RNBBRW(d,p)$ as the variant of $\NBFM(d,p)$ in which every jump away from the root activates new particles, and (ii) the transition probabilities towards and away from the root and 0 are the same for all steps of a particle's life in both models. 
This coupling ensures that transience of $\RNBBRW(d,p)$ implies transience of $\NBFM(d,p)$. 

Since \thref{lem:reflection} gives transience of $\RNBBRW(p)$ whenever $p \leq 1/6$, it suffices to prove that transience of $\RNBBRW(p)$ implies transience of $\RNBBRW(d,p)$. To this end, we may couple the initial particle at $0$ in both $\RNBBRW(d,p)$ and $\RNBBRW(p)$ to introduce the same number of particles at each step away from the root. Since $p^* < \hat p$, any subsequently introduced particle in $\RNBBRW(d,p)$ can be coupled with a unique particle in $\RNBBRW(p)$, so that the particle in $\RNBBRW(p)$ moves at least at close to $0$ before turning away. From there, we may couple the number of particles the two particles generate on their path to $\infty$ to be the same at each jump. This ensures that each particle in $\RNBBRW(d,p)$ corresponds to a unique particle in $\RNBBRW(p)$ that starts at least as close to $0$ and moves at least as close to $0$ as its counterpart. Thus, there are stochastically fewer total visits to 0 in $\RNBBRW(d,p)$. 
\end{proof}

\begin{lemma} \thlabel{lem:lb}
If $p > 1/6$, then $V_{\SFM(d,p)} \succeq \Poi(\lambda_d)$ for some sequence $\lambda_d \to \infty$.
\end{lemma}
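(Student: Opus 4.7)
The plan is to fix $p > 1/6$ and leverage the recurrence of $\NBBRW(p)$ from \thref{lem:nbBRW} via a truncated coupling with $\SFM(d,p)$ that becomes exact as $d\to\infty$.

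Since $\NBBRW(p)$ is recurrent and has finitely many particles alive at any finite time, for each positive integer $K$ and each $\epsilon>0$ I can find a time $T$ and a bound $N$ such that with probability at least $1-\epsilon$ the process has at least $K$ particles killed at $0$ by time $T$ while ever having at most $N$ particles in total. I then couple the first $T$ generations of a suitably initialized $\NBBRW(p)$ with $\SFM(d,p)$ on $\mathbb T_d$ by identifying each $\NBBRW(p)$ particle that moves right and produces $\Poi(1)$ offspring with a $\SFM(d,p)$ frog that moves to a previously unvisited child and activates the $\Poi(1)$ dormant frogs there. The coupling can fail only if two $\SFM(d,p)$ frogs try to move to the same child of a common parent; a union bound over at most $\binom{N}{2}$ sibling pairs shows this has probability $O(N^2/d)$, which vanishes as $d\to\infty$. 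On the coupling-success event, every $\NBBRW(p)$ particle killed at $0$ corresponds to a root visit in $\SFM(d,p)$, so $V_{\SFM(d,p)}\geq K$. Consequently $\P(V_{\SFM(d,p)}\geq K)\to 1$ as $d\to\infty$ for every fixed $K$; equivalently, $V_{\SFM(d,p)}\to\infty$ in probability.

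To upgrade this to the stated Poisson lower bound, for each positive integer $M$ I choose $d_M$ large enough that $\P(V_{\SFM(d,p)}\leq m)\leq \P(\Poi(M)\leq m)$ for every $m\geq 0$; since $\P(\Poi(M)\leq m)\to 1$ as $m\to\infty$, this is a finite condition for each $M$ and is guaranteed for $d$ large by the convergence just established. Setting $\lambda_d = M$ on $d\in[d_M, d_{M+1})$ produces a sequence $\lambda_d\to\infty$ with $V_{\SFM(d,p)}\succeq \Poi(\lambda_d)$ for every $d$.

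The main obstacle will be executing the coupling cleanly despite some dynamical mismatches between $\SFM(d,p)$ and $\NBBRW(p)$. In particular, $\SFM(d,p)$ uses the drift $p^*$ rather than $\hat p$ for the first step of a newly activated frog, and it activates preexisting dormant frogs rather than spawning fresh $\Poi(1)$ particles at each step away from the root. The former discrepancy is $O(1/d)$ and contributes at most $O(T/d)$ to the coupling failure probability, while the latter is precisely a bijection on the coupling-success event by Poisson thinning.
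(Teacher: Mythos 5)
Your first half---coupling $\SFM(d,p)$ with $\NBBRW(p)$ over a bounded time horizon, bounding the coupling failure probability by collision and drift-mismatch terms that vanish as $d\to\infty$, and invoking recurrence of $\NBBRW(p)$ for $p>1/6$---is essentially the paper's argument and is fine. The problem is the ``upgrade'' step. Convergence in probability of $V_{\SFM(d,p)}$ to $\infty$ does \emph{not} imply $V_{\SFM(d,p)}\succeq\Poi(\lambda_d)$ for some $\lambda_d\to\infty$. Your claim that the family of inequalities $\P(V_{\SFM(d,p)}\leq m)\leq\P(\Poi(M)\leq m)$ for all $m\geq 0$ is ``a finite condition'' is false: $\P(\Poi(M)\leq m)<1$ for every finite $m$, so the inequality is a genuine constraint at every $m$, and for large $m$ it amounts to a lower bound on the upper tail of $V_{\SFM(d,p)}$ that convergence in probability gives you no control over. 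A concrete counterexample to the inference you are using: a random variable equal to $d$ deterministically tends to infinity in probability, yet it stochastically dominates no $\Poi(M)$ with $M>0$, since $\P(\Poi(M)>d)>0$. Unless you know either that $V_{\SFM(d,p)}=\infty$ almost surely (which is what the lemma is ultimately being used to help prove, so you cannot assume it) or something about its tail, the upgrade does not go through.

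The missing ingredient is structural: because the initial configuration of $\SFM(d,p)$ is i.i.d.\ $\Poi(1)$ per site, Poisson thinning shows that $V_{\SFM(d,p)}$ is itself a Poisson random variable with a random mean. \thref{lem:compare} then says that for such mixed-Poisson variables, $V_{\SFM(d,p)}\succeq\Poi(\lambda)$ is \emph{equivalent} to the single inequality $\P(V_{\SFM(d,p)}=0)\leq e^{-\lambda}$. Your coupling argument does control $\P(V_{\SFM(d,p)}=0)$ (it shows this probability tends to $0$ as $d\to\infty$), so once you add the mixed-Poisson observation the conclusion follows immediately by taking $\lambda_d=-\log\P(V_{\SFM(d,p)}=0)$. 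Without that observation, the final step of your proposal is a genuine gap.
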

\begin{proof}
Note that $p^* \to \hat p$ as $d\to \infty$, and the probability of an active particle moving to an unvisited site within the first $t$ time steps converges to $1$ as $d \to \infty$. Thus, the probability that $\SFM(d,p)$ and $\NBBRW(p)$ couple to have the exact same behavior for the first $t$ time steps converges to $1$ as $d \to \infty$ for any fixed $t \geq 0$. By \thref{lem:nbBRW}, the probability of no root visits in $\NBBRW(d,p)$ goes to 0 as $t \to \infty$. Letting $V_{\SFM(d,p)}(t)$ be the number of root visits in $\SFM(d,p)$ up to time $t$, we then have $\P(V_{\SFM(d,p)}(t) = 0) \to 0$ as $d \to \infty$. Poisson thinning ensures that the number of root visits in $\SFM(d,p)$ has a Poisson distribution with random mean. By \thref{lem:compare}, $V_{\SFM(d,p)} \succeq \Poi(\lambda_d)$ for some sequence $\lambda_d \to \infty$. 
\end{proof}

\section{Proof of \thref{thm:lim}}\label{sec:lim}
We use the criteria from Section~\ref{sec:sup} to prove that the limiting critical drift for $\NBFM(d,p)$ is the same as that in $\NBBRW(p).$ The basic idea is to construct a random variable $U'' \preceq U$ from Section~\ref{sec:A}. We make it stochastically smaller by only allowing one leaf to be activated at a time. By taking $d$ large, we are able to get sufficiently strong bounds on the probability that $U''$ takes a small value.

Before defining $U''$, we describe an alternate way to sample $U$ via an \emph{exploration process}. Let $A_1$ be the set of leaves among $v_2,...,v_d$ that are visited by active particles started from $\varnothing'$ and $v_1$, and $C_1$ be the empty set. If $A_1$ is empty, then the process terminates. If not, then select a leaf $v$ from $A_1$ and allow the activated particles there to move until reaching $\varnothing$ or a leaf. Let $B_1$ be the set of leaves among $\{v_2,...,v_d\}-\{v\}$ that are visited for the first time by particles from $v$. Set $A_2 = A_1 \cup B_1 - \{v\}$ and $C_2 = C_1 \cup \{v\}$. Continue in this fashion to form $A_{n+1}$ by removing a leaf $v$ from $A_n$, adding $v$ to $C_{n+1}$, and adding the leaves visited for the first time by particles started from $v$ to $A_{n+1}$. Once $A_N$ is empty (which occurs after at most $d-1$ steps), we have $C_N$ is the set of leaves that are activated and so $U =|C_N|$. 

We define $U'' \preceq U$ by modifying the exploration process. When $B_n$ is non-empty, instead of adding all of its leaves to $A_n$ to form $A_{n+1}$, we choose a single leaf from $B_n$ and add it to $A_{n+1}$. We ignore the visit of any other leaves in $B_n$. In later rounds, all the leaves in $B_n$ except the one that was selected act as if the particles there are still dormant. Let $U''$ be the number of vertices activated in this modified process. 

Since we are potentially ignoring visits to dormant particles 
we have $U'' \preceq U.$ Moreover, for any $1\leq j\leq 4$ we have 
\begin{align}
    \P(U'' =j ) \leq e^{- (1- \hat p) \lambda (d-5)/(d-1)}.\label{eq:U''j}
\end{align}
This is because in order to have $U''=j$, the $j$th exploration must fail to visit any leaves with dormant particles. Since we are looking at the first $j\leq 5$ steps, there are always at least $d-5$ dormant leaves. Using Poisson thinning, the number of particles moving to leaves with dormant particles dominates a Poisson random variable with mean $(1- \hat p) \lambda (d-5)/(d-1)$. So the probability of failure at the $j$th step, is bounded by the probability that this Poisson distribution is 0, which is \eqref{eq:U''j}.

\begin{lemma} \thlabel{lem:U''}
    Let $U''$ be as defined above. Given $p > 1/6$, there exists $\lambda_0 \geq 0$ such that for all sufficiently large $d$ and $\lambda \geq \lambda_0$ 
        \begin{align}
        \E[ e^{-p^* - \hat p (1+ U'') \lambda} ] \leq e^{- \lambda - \f 18}.\label{eq:U''}
        \end{align}
\end{lemma}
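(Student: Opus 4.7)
The plan is to expand the expectation in \eqref{eq:U''} as a finite sum over the possible values of $U''\in\{0,1,\hdots,d-1\}$ and to control three regimes separately: $j=0$, $1\leq j\leq 4$, and $j\geq 5$. After multiplying through by $e^\lambda$, the desired bound becomes
$$S_\lambda := e^{-p^*}\sum_{j=0}^{d-1} e^{(1-\hat p(1+j))\lambda}\,\P(U''=j) \leq e^{-1/8}.$$

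For $j=0$, I would compute $\P(U''=0)$ exactly via Poisson thinning. This event coincides with no particle from either $\varnothing'$ or $v_1$ ever visiting a leaf of $\{v_2,\hdots,v_d\}$, and the total number of such visits is Poisson with mean $(1-p^*)(d-1)/d + (1-\hat p)\lambda$. Thus $\P(U''=0) = \exp(-(1-p^*)(d-1)/d - (1-\hat p)\lambda)$, and the factor $e^{-(1-\hat p)\lambda}$ exactly cancels the growing $e^{(1-\hat p)\lambda}$ present in $S_\lambda$. The $j=0$ contribution is therefore $e^{-p^* - (1-p^*)(d-1)/d}$, which converges to $e^{-1}$ as $d\to\infty$ since $p^*\to\hat p$.

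For the middle range $1\leq j\leq 4$, I would apply the explicit bound \eqref{eq:U''j}, bounding each term by $e^{-p^*}\exp(-\hat p j\lambda + 4(1-\hat p)\lambda/(d-1))$. For $j\geq 5$ I would use the trivial bound $\sum_{j\geq 5}\P(U''=j)\leq 1$ together with $e^{(1-\hat p(1+j))\lambda} \leq e^{(1-6\hat p)\lambda}$, yielding a combined contribution of at most $e^{-p^*}e^{(1-6\hat p)\lambda}$. Here the hypothesis $p>1/6$, equivalently $\hat p > 1/5$, is essential: it forces $1-6\hat p < -1/5 < 0$, so that this piece decays exponentially in $\lambda$.

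Combining the three regimes gives
$$S_\lambda \leq e^{-p^* - (1-p^*)(d-1)/d} + 4\,e^{-p^*}e^{-\hat p\lambda + 4(1-\hat p)\lambda/(d-1)} + e^{-p^*}e^{(1-6\hat p)\lambda}.$$
Since $e^{-1}<e^{-1/8}$ with ample slack, the argument concludes by first choosing $d_0$ large enough both that $4(1-\hat p)/(d_0-1)<\hat p/2$ (so the decay in the middle range is uniform over $d\geq d_0$) and that the $j=0$ contribution sits within half of the slack above $e^{-1}$, and then choosing $\lambda_0$ large enough that both $\lambda$-dependent terms are smaller than the remaining slack for all $\lambda\geq \lambda_0$. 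The main subtlety is exactly this decoupling: selecting $\lambda_0$ independent of $d$ requires first driving $d$ past the threshold where the error $4(1-\hat p)/(d-1)$ can no longer overwhelm the $-\hat p \lambda$ decay coming from \eqref{eq:U''j} in the $1\leq j\leq 4$ contributions.
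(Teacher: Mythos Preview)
Your proposal is correct and follows essentially the same approach as the paper's proof: the identical three-regime decomposition ($j=0$, $1\le j\le 4$, $j\ge 5$), the Poisson-thinning cancellation at $j=0$, the use of \eqref{eq:U''j} for the middle range, and the trivial bound for $j\ge 5$. The only cosmetic difference is bookkeeping of the constant: the paper uses the uniform bound $p^*\ge 1/7$ for $d\ge 3$ and arranges $1+4e^{-\delta\lambda_0}+e^{-\lambda_0/5}\le e^{1/56}$ so that $e^{-1/7}\cdot e^{1/56}=e^{-1/8}$ exactly, whereas you keep the sharper $j=0$ term $e^{-p^*-(1-p^*)(d-1)/d}\to e^{-1}$ and absorb the slack below $e^{-1/8}$.
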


\begin{proof}
    We first note that if $p>1/6$, then $p^*\geq 1/7$ for all $d\geq 3$. We expand the expectation at \eqref{eq:U''} to obtain
    \begin{align}
        \E[ e^{-p^* - \hat p (1+ U'') \lambda }]  & = e^{-p^*  } \sum_{u=0}^{d-1} e^{ - \hat p (1+u) \lambda } \P(U''=u)\\
        &\leq e^{-\f 1 7} \sum_{u=0}^{d-1} e^{ - \hat p (1+u) \lambda } \P(U''=u).
    \end{align}
Setting the $e^{-\f 17}$ factor aside for a moment, we decompose the sum into three parts:
    \begin{align}
              e^{-\hat p \lambda} \P(U''=0) + \sum_{u=1}^4 e^{-\hat p (1+u) \lambda } \P(U''=u) + \sum_{u\geq 5} e^{- \hat p (1+u) \lambda} \P(U''=u).
    \end{align}
    The first term is bounded by $e^{-\hat p \lambda } e^{-(1-\hat p) \lambda} = e^{-\lambda}.$
    Each summand in the second term is bounded by $e^{- 2\hat p \lambda} e^{- (1- \hat p) \lambda (d-5)/(d-1)}.$ Each summand in the third term is bounded by $e^{- 6 \hat p\lambda} \cdot 1.$
    
    Putting these bounds together, then factoring out $\lambda$ gives
    \begin{align}
             \E[ e^{-p^* - \hat p (1+ U'') \lambda }]  &\leq e^{- \f 17 }\left[e^{-\lambda} + 4 e^{- 2\hat p \lambda} e^{- (1- \hat p) \lambda (d-5)/(d-1)} +  e^{- 6\hat p  \lambda}  \right]\\
            &= e^{-\lambda }e^{- \f 17}\left[ 1 + 4 e^{ (1- 2 \hat p - (1- \hat p) (d-5)/(d-1) )  \lambda } + e^{(1 - 6 \hat p )\lambda}  \right].    \end{align}
Recall that for $p>1/6$, $\hat p > 1/5$ and thus $1- 6 \hat p \leq - 1/5$. Also, let $\delta>0$ be sufficiently small so that for $d$ large enough
$$1- 2\hat p  - (1- \hat p)\f{d-5}{d-1} \leq  -\delta. $$ For such $d$, we then have
\begin{align}
    \E[ e^{-p^* - \hat p (1+ U'') \lambda }] & \leq e^{-\lambda} e^{- \f 17 } \left[ 1 + 4 e^{-\delta \lambda} + e^{-\lambda/5} \right]. \label{eq:almost}
\end{align}
By taking $\lambda_0$ large enough, we get
\begin{align}
1 + 4 e^{-\delta \lambda_0} + e^{-\lambda_0/5} \leq e^{\frac{1}{56}}. \label{eq:done}
\end{align}
Applying \eqref{eq:done} to \eqref{eq:almost} gives \eqref{eq:U''} for all $\lambda \geq \lambda_0$ and $d$ large enough. 
\end{proof}


Now we are ready to prove our second theorem.

\begin{proof}[Proof of \thref{thm:lim}]
\thref{lem:transient} implies that $p_d' \geq 1/6$. For $p > 1/6$, \thref{lem:lb} ensures that $V_{\SFM(d,p)} \succeq \Poi(\lambda_0)$ with $\lambda_0$ from the proof of \thref{lem:U''} for all $d$ large enough.
It follows from \thref{prop:suff} that $\SFM(d,p)$ is recurrent. By \thref{lem:so}, $\NBFM(d,p)$ is recurrent. This gives \thref{thm:lim}.
\end{proof}

\begin{proof}[Proof of \thref{thm:limsup}]
    The result follows from \thref{thm:lim} and \thref{lem:so}.
\end{proof}

\section{Numerical simulations} \label{sec:sims}

We describe some numerical simulations to estimate $p_3$, $p_3'$, and $p_4'$. The plots in Figures~\ref{fig:fmd3},~\ref{fig:nbfmd3},~\ref{fig:nbfmd4} were created using SageMath~\cite{sagemath}, extending the code used in~\cite{hoffman2017recurrence}. Our code and a readme file are posted to the arXiv ancillary files for this article. The approach mirrors that of~\cite{hoffman2017recurrence} with the appropriate adaptations for working with $\FM(d,p)$ and $\NBFM(d,p)$.  To keep this section self-contained, we briefly review the method. 


An adaptation of $\FM(d,p)$ or $\NBFM(d,p)$ is to insert stunning fences at each depth $\ell$.  The role of these fences is to stun (i.e., temporarily freeze in place) frogs when they first reach level $\ell$.  Let $A_{d,p}(\ell)$ (resp. $A'_{d,p}(\ell)$) be the number of frogs in the $\FM(d,p)$ model (resp. $\NBFM(d,p)$) that reach level $\ell$ once all awake frogs have been stunned. As noted in~\cite{hoffman2017recurrence}, counting the number of root visits in direct simulations of either model is difficult to analyze due to rapid growth. Alternatively, we examine summability of 
\begin{equation}\label{eq:simsum}
    \sum_{\ell\geq 1}\hat{p}^\ell\E[A_{d,p}(\ell)],  
\end{equation}
recalling the notation from \eqref{eq:p*}, $\hat{p}=p/(1-p)$.  Notice that if 
\[N_k=N_k(d,p)\coloneqq\#\{\text{The root is visited between the $k$ and $k+1$th stunnings}\},\] then $\E[N_k]\gg \hat{p}^k\E[A_{d,p}(k)]$.  Hence the expected number of root visits is infinite when \eqref{eq:simsum} diverges.  This is strong evidence for the associated model being recurrent (for fixed $d, p$).  In particular \eqref{eq:simsum} diverges when
\[s_{d,p}(\ell) \coloneqq \ell\hat{p}^\ell\E[A_{d,p}(\ell)]\] is bounded below (and similarly for $s'_{d,p}(\ell)$ for non-backtracking).

Figures~\ref{fig:fmd3}, ~\ref{fig:nbfmd3},~\ref{fig:nbfmd4} show simulations of $s_{3,p}(\ell)$, $s'_{3,p}(\ell)$, and $s'_{4,p}(\ell)$ for levels $\ell\leq15$, for three different values of $p$ in each case.  The simulations were run with $1000$ iterations up to level $10$, and with $500$ iterations for levels $11$ through $15$. Computational running time increases swiftly with $\ell$. The simulations suggest the values at \eqref{eq:sims}.


\begin{figure}
\centering
    \includegraphics[width=1\textwidth]{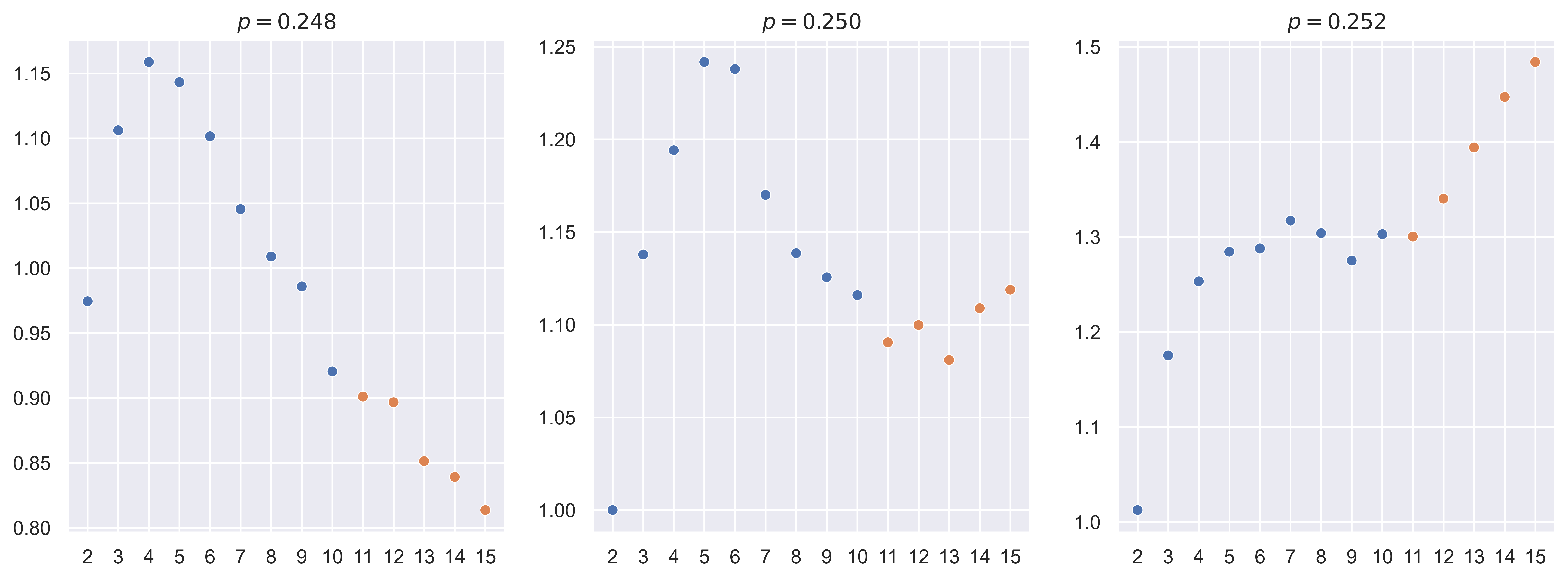}
    \caption{Simulated values of $s_{3,p}(\ell)$ for $\FM(3,p)$ for different values of $p$, for levels $\ell\leq 15$. The blue points represent simulations of $1000$ trials, and orange points represent simulations of $500$ trials.}\label{fig:fmd3}
\end{figure}
\begin{figure}
\centering
    \includegraphics[width=1\textwidth]{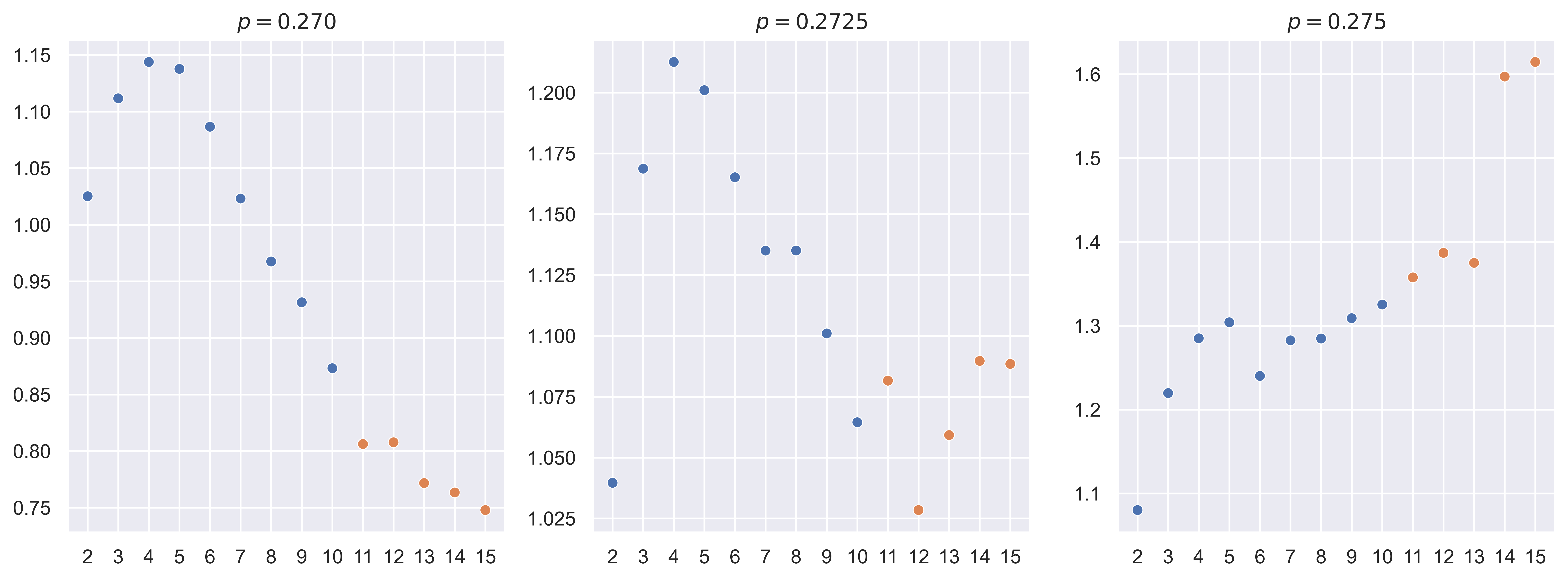}
    \caption{Simulated values of $s'_{3,p}(\ell)$ for $\NBFM(3,p)$ for different values of $p$, for levels $\ell\leq 15$. The blue points represent simulations of $1000$ trials, and orange points represent simulations of $500$ trials.}\label{fig:nbfmd3}
    \end{figure}    
\begin{figure}
\centering
    \includegraphics[width=1\textwidth]{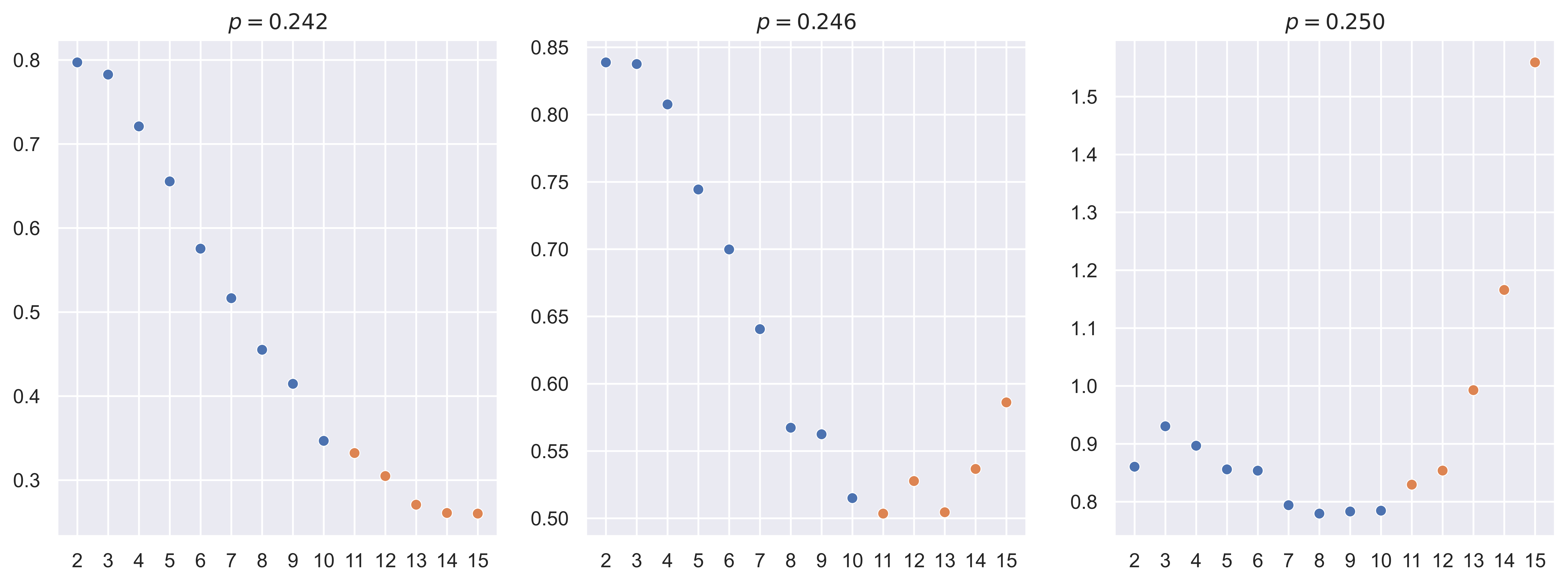}
    \caption{Simulated values of $s'_{4,p}(\ell)$ for $\NBFM(4,p)$ for different values of $p$, for levels $\ell\leq 15$. The blue points represent simulations of $1000$ trials, and orange points represent simulations of $500$ trials.} \label{fig:nbfmd4}
\end{figure}


\newpage 

\bibliographystyle{alpha}

\bibliography{frog}

\end{document}